\title{Diagrammatic morphisms between indecomposable modules of $\bar{U}_{q}(\mathfrak{sl}_{2})$}
\author{Stephen T. Moore \\
	\multicolumn{1}{p{.8\textwidth}}{\centering\emph{Dept. of Mathematics, Ben Gurion University, Beer-Sheva, Israel,\\ stm862@gmail.com}}}
\newtheorem{thm}{Theorem}[section]
\newtheorem{prop}{Proposition}[section]
\newtheorem{corr}{Corollary}[section]
\newtheorem{lemma}{Lemma}[section]
\newtheorem{remark}{Remark}[section]
\begin{document}

\maketitle

\begin{abstract}
We give diagrammatic formulae for morphisms between indecomposable representations of $\bar{U}_{q}(\mathfrak{sl}_{2})$ appearing in the decomposition of $\mathbb{C}^{\otimes 2n}$, including projections and second endomorphisms on projective indecomposable representations.
\end{abstract}

\section{Introduction}

The Temperley-Lieb algebras, $TL_{n}(\delta)$, are an important family of algebras which link a number of areas of mathematics, including statistical mechanics, subfactors, quantum groups, and knot theory \cite{TL,Jones1,Martin,Jones2}. One way of realizing them is as the centralizer algebra of $U_{q}(\mathfrak{sl}_{2})$ on $\mathbb{C}^{\otimes 2n}$. Fundamental in this construction and in many uses of the Temperley-Lieb algebras are the Jones-Wenzl projections, $f_{n}$. In the centralizer construction, these are the projections onto the unique irreducible representation $\mathcal{X}_{n+1}$ appearing in the decomposition of $\mathbb{C}^{\otimes 2n}$. 

Recently, there has been interest in a finite dimensional quotient of $U_{q}(\mathfrak{sl}_{2})$ at even roots of unity, known as the \textit{restricted quantum group}, $\bar{U}_{q}(\mathfrak{sl}_{2})$. This algebra was conjectured in \cite{FGST3} to have representation theory equivalent to a logarithmic conformal field theory. A description of its centralizer algebra on $\mathbb{C}^{\otimes 2n}$ was given in \cite{GST,methesis,mepaper}, and its representation theory has shown potential for topological invariants based on modified traces \cite{BBG}. 

$\bar{U}_{q}(\mathfrak{sl}_{2})$ is a non-semisimple Hopf algebra, and both irreducible and projective indecomposable representations appear in the decomposition of $\mathbb{C}^{\otimes 2n}$. In Section \ref{background}, we review the representation theory of $\bar{U}_{q}(\mathfrak{sl}_{2})$, and describe its centralizer algebra, given here in the form of a planar algebra. Section \ref{projections} contains the main result of this paper, which is to give formulae for the projections onto the unique indecomposable representations appearing in the $\mathbb{C}^{\otimes 2n}$ decomposition, similar to the Jones-Wenzl projections. In Section \ref{morphisms}, we give diagrammatic formulae for other non-trivial morphisms between indecomposable representations, including their nilpotent second endomorphisms. Combinatorial relations for $\bar{U}_{q}(\mathfrak{sl}_{2})$ and its action on $\mathbb{C}^{\otimes 2n}$ are given in the Appendix. Inductive formulae for the indecomposable projections and second endomorphisms were given independently in \cite{Ibanez}.

\section{Background}\label{background}
For $q=e^{i\pi/p}$, $p\geq 2$, and $p\in \mathbb{N}$, the restricted quantum group $\bar{U}_{q}(\mathfrak{sl}_{2})$ over a field $\mathbb{K}$ is the Hopf algebra generated by $E,F,K$ subject to the relations:
\begin{align*}
KEK^{-1}&=q^{2}E& KFK^{-1}&=q^{-2}F& EF-FE&=\frac{K-K^{-1}}{q-q^{-1}}\\
E^{p}&=0& F^{p}&=0& K^{2p}&=1
\end{align*}
and coproduct $\Delta$, counit $\epsilon$, and antipode $S$:
\begin{align*}
\Delta:E&\mapsto E\otimes K+1\otimes E& F&\mapsto F\otimes 1+K^{-1}\otimes F& K&\mapsto K\otimes K\\
\epsilon:E&\mapsto 0& F&\mapsto 0& K&\mapsto 1\\
S:E&\mapsto-EK^{-1}& F&\mapsto-KF& K&\mapsto K^{-1}
\end{align*}
The modules for $\bar{U}_{q}(\mathfrak{sl}_{2})$ were given in \cite{Arike,GST,KoSa,Suter,Xiao}. The modules relevant here are the simple and projective modules, and consist of the following:
$2p-2$ simple modules, $\mathcal{X}^{\pm}_{s}$, $1\leq s<p$, two simple projective modules $\mathcal{X}^{\pm}_{p}$ and $2p-2$ non-simple indecomposable projective modules $\mathcal{P}^{\pm}_{s}$. The simple modules $\mathcal{X}^{\pm}_{s}$, $1\leq s\leq p$, have basis $\{\nu_{n}^{s}\}_{n=0,...,s-1}$ and $\bar{U}_{q}(\mathfrak{sl}_{2})$ action given by:
\begin{align*}
K\nu_{n}&=\pm q^{s-1-2n}\nu_{n}\\
E\nu_{n}&=\pm[n][s-n]\nu_{n-1}\\
F\nu_{n}&=\nu_{n+1}
\end{align*}
where $\nu_{-1}=\nu_{s}=0$ and $[n]=\frac{q^{n}-q^{-n}}{q-q^{-1}}=q^{n-1}+q^{n-3}+...+q^{3-n}+q^{1-n}$. The projective modules $\mathcal{P}^{\pm}_{s}$, $1\leq s<p$, for a given choice of $p$, can be given in terms of the basis $\{a_{i}^{s,p},b_{i}^{s,p}\}_{0\leq i \leq s-1}\cup\{x_{j}^{s,p},y_{j}^{s,p}\}_{0\leq j\leq p-s-1}$. The action of $\bar{U}_{q}(\mathfrak{sl}_{2})$ is given by:
\begin{align*}
Ka_{i}&=\pm q^{s-1-2i}a_{i}& Kb_{i}&=\pm q^{s-1-2i}b_{i}\\
Kx_{j}&=\mp q^{p-s-1-2j}x_{j}& Ky_{j}&=\mp q^{p-s-1-2j}y_{j}\\
Ea_{i}&=\pm[i][s-i]a_{i-1}& Eb_{i}&=\pm[i][s-i]b_{i-1}+a_{i-1}& Eb_{0}&=x_{p-s-1}\\
Ex_{j}&=\mp[j][p-s-j]x_{j-1}& Ey_{j}&=\mp[j][p-s-j]y_{j-1}& Ey_{0}&=a_{s-1}\\
Fa_{i}&=a_{i+1}& Fb_{i}&=b_{i+1}& Fb_{s-1}&=y_{0}\\
Fx_{j}&=x_{j+1}& Fx_{p-s-1}&=a_{0}& Fy_{j}&=y_{j+1}
\end{align*}
where $x_{-1}=a_{-1}=a_{s}=y_{p-s}=0$, \cite{FGST2}.\\
\\
The maps between indecomposable modules can be summarized as follows:
\begin{itemize}
	\item $\dim\left( Hom(\mathcal{X}^{\pm}_{s},\mathcal{X}^{\pm}_{t})\right) =0$ for $s\neq t$ or $1$ for $s=t$, for $1\leq s,t\leq p$.
	\item $ Hom(\mathcal{X}^{\pm}_{s},\mathcal{X}^{\mp}_{t}) =0$ for $1\leq s,t\leq p$.
	\item $ \dim\left( Hom(\mathcal{P}^{\pm}_{s},\mathcal{X}^{\pm}_{t})\right) =0$ for $s\neq t$ or $1$ for $s=t$, for $1\leq s,t \leq p-1$.
	\item $ Hom(\mathcal{P}^{\pm}_{s},\mathcal{X}^{\mp}_{t}) =0$ for $1\leq s,t\leq p-1$.
	\item $\dim\left( Hom(\mathcal{P}^{\pm}_{s},\mathcal{P}^{\pm}_{t})\right) =0$ for $s\neq t$ or $2$ for $s=t$, $1\leq s,t\leq p-1$.
	\item $\dim\left( Hom(\mathcal{P}^{\pm}_{s},\mathcal{P}^{\mp}_{t})\right) =0$ for $s\neq p-t$ or $2$ for $s=p-t$, for $1\leq s,t\leq p-1$.
\end{itemize}
The non-zero, and non-identity parts of these maps are given in terms of bases by:
\begin{align*}
\mathcal{P}^{\pm}_{s}\rightarrow\mathcal{X}^{\pm}_{s}:& b_{i}\mapsto \nu_{i}\\
\mathcal{X}^{\pm}_{s}\rightarrow\mathcal{P}^{\pm}_{s}:& \nu_{i}\mapsto a_{i}\\
\mathcal{P}^{\pm}_{s}\rightarrow\mathcal{P}^{\pm}_{s}:& b_{i}\mapsto a_{i}\\
\mathcal{P}^{\pm}_{s}\rightarrow\mathcal{P}^{\mp}_{p-s}:& b_{i}\mapsto g_{1}\tilde{x}_{i}+g_{2}\tilde{y}_{i}\\
& x_{j}\mapsto g_{2}\tilde{a}_{j}\\
& y_{j}\mapsto g_{1}\tilde{a}_{j}
\end{align*}
where $g_{1},g_{2}\in\mathbb{K}$, and we denote the elements of $\mathcal{P}^{\mp}_{s}$ with $\sim$.\\
\\
From now on, we denote the module $\mathcal{X}^{+}_{2}$ by $X$. The fusion rules for $\bar{U}_{q}(\mathfrak{sl}_{2})$ modules were given in \cite{KoSa,TsuWo}. The fusion rules for the simple and projective modules are:
\begin{align*}
&\mathcal{X}^{i}_{1}\otimes\mathcal{X}^{j}_{s}\simeq\mathcal{X}^{j}_{s}\otimes\mathcal{X}^{i}_{1}\simeq\mathcal{X}^{ij}_{s}, \:\:\: i,j\in\{+,-\}\\
&\mathcal{X}^{i}_{1}\otimes\mathcal{P}^{j}_{s}\simeq\mathcal{P}^{j}_{s}\otimes\mathcal{X}^{i}_{1}\simeq\mathcal{P}^{ij}_{s}\\
& X\otimes\mathcal{X}^{\pm}_{t}\simeq\mathcal{X}^{\pm}_{t}\otimes X\simeq\mathcal{X}^{\pm}_{t-1}\oplus\mathcal{X}^{\pm}_{t+1}, \:\:\: 2\leq t\leq p-1\\
& X\otimes\mathcal{X}^{\pm}_{p}\simeq \mathcal{X}^{\pm}_{p}\otimes X\simeq \mathcal{P}^{\pm}_{p-1}\\
& X\otimes\mathcal{P}^{\pm}_{p-1}\simeq \mathcal{P}^{\pm}_{p-1}\otimes X\simeq \mathcal{P}^{\pm}_{p-2}\oplus 2\mathcal{X}^{\pm}_{p}\\
& X\otimes\mathcal{P}^{\pm}_{u}\simeq\mathcal{P}^{\pm}_{u}\otimes X\simeq\mathcal{P}^{\pm}_{u-1}\oplus\mathcal{P}^{\pm}_{u+1}, \:\:\: 2\leq u\leq p-2\\
& X\otimes\mathcal{P}^{\pm}_{1}\simeq\mathcal{P}^{\pm}_{1}\otimes X\simeq\mathcal{P}^{\pm}_{2}\oplus 2\mathcal{X}^{\mp}_{p}
\end{align*}
From this, it follows that $\mathcal{X}^{+}_{s}$ first appears in the decomposition of $X^{\otimes (s-1)}$, $\mathcal{P}^{+}_{t}$ first appears in the decomposition of $X^{\otimes(2p-t-1)}$, $\mathcal{X}^{-}_{p}$ in $X^{\otimes(2p-1)}$, and $\mathcal{P}^{-}_{u}$ in $X^{\otimes(3p-u-1)}$. $\mathcal{X}^{-}_{v}$, $1\leq v\leq p-1$, does not appear at all in the decomposition of $X^{\otimes n}$.

\subsection{The $\bar{U}_{q}(\mathfrak{sl}_{2})$ Planar Algebra}
For detailed introductions to planar algebras, see \cite{EvansPugh2, JonesP1, Kuper, MPS}. The $\bar{U}_{q}(\mathfrak{sl}_{2})$ Planar Algebra is a diagrammatic description of $End_{\bar{U}_{q}(\mathfrak{sl}_{2})}(X^{\otimes n})$, or more generally $Hom(X^{\otimes n},X^{\otimes m})$. For a more detailed introduction to this planar algebra, see \cite{methesis,mepaper}.
The module $X:=\mathcal{X}^{+}_{2}$ has basis $\{\nu_{0},\nu_{1}\}$, with $\bar{U}_{q}(\mathfrak{sl}_{2})$ action:
\begin{align*}
K(\nu_{0})&=q\nu_{0} & E(\nu_{0})&=0 & F(\nu_{0})&=\nu_{1}\\
K(\nu_{1})&=q^{-1}\nu_{1} & E(\nu_{1})&=\nu_{0} & F(\nu_{1})&=0
\end{align*}
The action of $\bar{U}_{q}(\mathfrak{sl}_{2})$ on $X^{\otimes n}$ is then given by use of the coproduct.\\
\\
We denote by $\rho_{i_{1},...,i_{n},z}$ the element of $X^{\otimes z}$ with $\nu_{1}$ at positions $i_{1},...,i_{n}$, and $\nu_{0}$ elsewhere. We also occasionally omit the $\otimes$ sign, and combine indices. For example, $\rho_{1,3,5}=\nu_{1}\otimes\nu_{0}\otimes\nu_{1}\otimes\nu_{0}\otimes\nu_{0}=\nu_{10100}$. The elements of $X^{\otimes z}$ can be described in terms of the $K$-action on them. For $x\in X^{\otimes z}$, with $K(x)=\lambda x$, $\lambda\in\mathbb{K}$, we call $\lambda$ the weight of $x$. Alternatively for basis elements we can write this as $K(\rho_{i_{1},...,i_{n},z})=q^{z-2n}x$, and refer to $n$ also as the weight. $X^{\otimes z}$ will then have the set of weights $\{q^{z},q^{z-2},...,q^{2-z},q^{-z}\}$. Denoting the set of elements of $X^{\otimes z}$ with weight $q^{z-2n}$ by $X_{n,z}$, we have $X^{\otimes z}=\bigcup\limits_{i=0}^{z}X_{i,z}$. The weight spaces $X_{0,z}$, $X_{z,z}$ both have a single element, which we denote by $x_{0,z}:=(\nu_{0})^{\otimes z}$, $x_{z,z}:=(\nu_{1})^{\otimes z}$ respectively, and occasionally drop the second index if the context is clear. We have $\rho_{i_{1},...,i_{n},z}\in X_{n,z}$. We record a number of combinatorial relations involving $\bar{U}_{q}(\mathfrak{sl}_{2})$ and its action on $X^{\otimes z}$ in the Appendix.\\

It was shown in \cite{GST} that for $n<2p-1$, $End(X^{\otimes n})\simeq TL_{n}(q+q^{-1})$. The action of the $TL_{n}$ generators is as the composition of two maps, $X\otimes X\rightarrow \mathcal{X}^{+}_{1}$, and $\mathcal{X}^{+}_{1}\rightarrow X\otimes X$. In terms of $\nu_{ij}$, these are:
\begin{align*}
\cup(\nu_{00})=&\cup(\nu_{11})=0,& \cup(\nu_{01})=&-q\nu,& \cup(\nu_{10})=& \nu,& \cap(\nu)=& q^{-1}\nu_{10}-\nu_{01}
\end{align*}
These generalize to give maps $\cup_{i}$, $\cap_{i}$ acting on the $i$th and $(i+1)$th positions of $X^{\otimes n}$, which then give $\mathbf{e}_{i}=\cap_{i}\cup_{i}$.\\
The \textit{Jones-Wenzl projections} \cite{Wenzl}, are the unique projections $X^{\otimes n}\rightarrow\mathcal{X}^{+}_{n+1}\rightarrow X^{\otimes n}$. They are defined inductively by $f_{0}:=\Box $, $f_{1}:=\lvert$, $f_{n}:=f_{n-1}\otimes\lvert -\frac{[n-1]}{[n]}f_{n-1}\mathbf{e}_{n}f_{n-1}$. Explicitly, this projection is given by $\rho_{i_{1},...,i_{k},n}\mapsto q^{\big(kn-\frac{1}{2}(k^{2}-k)-(\sum\limits_{j=1}^{k}i_{j})\big)}\frac{([n-k]!)}{([n]!)}F^{k}x_{0,n}$.\\

For $n\geq 2p-1$, The planar algebra has extra generators $\alpha_{i},\beta_{i}$, $1\leq i\leq n-2p+2$, which act on $2p-1$ copies of $X$. Explicitly, these generators are given by:
\begin{align*}
\alpha(\rho_{i_{1},...,i_{k},2p-1})&:=q^{\big(k(2p-1)-\frac{1}{2}(k^{2}-k)-(\sum\limits_{j=1}^{k}i_{j})\big)}([k]!)E^{p-k-1}x_{2p-1}\\
\beta(\rho_{i_{1},...,i_{k},2p-1})&:=q^{\big(k(2p-1)-\frac{1}{2}(k^{2}-k)-(\sum\limits_{j=1}^{k}i_{j})\big)}([2p-1-k]!)F^{k-p}x_{0}
\end{align*}
where $E^{-1}=F^{-1}=0$. In terms of weight spaces, the generators act as:
\begin{align*}
\alpha:& X_{k,2p-1}\rightarrow X_{k+p,2p-1},& \beta:& X_{k,2p-1}\rightarrow X_{k-p,2p-1} 
\end{align*}
The relations satisfied by these generators were proven in \cite{GST, mepaper}, and are given as follows:
\begin{thm}\label{thm}
	The generators, $\alpha_{i}$ and $\beta_{i}$, satisfy the following properties:
	\begin{align}
	\alpha^{2}&=\beta^{2}=0 \label{eq:1}\\
	\alpha\beta\alpha&=\gamma\alpha \label{eq:2}\\
	\beta\alpha\beta&=\gamma\beta \label{eq:3}\\
	\gamma&=(-1)^{p-1}([p-1]!)^{2} \nonumber \\
	\alpha_{i}\alpha_{j}=\alpha_{j}\alpha_{i}&=\beta_{i}\beta_{j}=\beta_{j}\beta_{i}=0, \:\: \lvert i-j\rvert< p \label{eq:4}\\
	\alpha_{i}\alpha_{i+p}&=\alpha_{i+p}\alpha_{i} \label{eq:5}\\
	\beta_{i}\beta_{i+p}&=\beta_{i+p}\beta_{i} \label{eq:6}\\
	\alpha\beta+\beta\alpha&=\gamma f_{2p-1} \label{eq:7}
	\end{align}
	Denote by $R_{n}$ the (clockwise) (n,n)-point annular rotation tangle. We then have:
	\begin{align}
	\alpha\cap_{i}=\cup_{i}\alpha&=\beta\cap_{i}=\cup_{i}\beta=0, \:\: 1\leq i\leq 2p-2 \label{eq:8}\\
	\alpha_{i+1}\cap_{i}&=\alpha_{i}\cap_{i+2p-2} \label{eq:9}\\
	\beta_{i+1}\cap_{i}&=\beta_{i}\cap_{i+2p-2} \label{eq:10}\\
	\cup_{i}\alpha_{i+1}&=\cup_{i+2p-2}\alpha_{i} \label{eq:11}\\
	\cup_{i}\beta_{i+1}&=\cup_{i+2p-2}\beta_{i} \label{eq:12}\\
	R_{4p-2}(\alpha)&=\alpha \label{eq:13}\\
	R_{4p-2}(\beta)&=\beta \label{eq:14}\\
	\sum\limits_{i=0}^{4p-1} k_{i}R^{i}_{4p}(\alpha\otimes 1)&=0 \label{eq:15}\\
	\sum\limits_{i=0}^{4p-1} k_{i}R^{i}_{4p}(\beta\otimes 1)&=0 \label{eq:16}
	\end{align}
	where $k_{i}=(-1)^{i}[i-2]k_{1}+(-1)^{i}[i-1]k_{2}$, for arbitrary $k_{1},k_{2}\in\mathbb{K}$.
\end{thm}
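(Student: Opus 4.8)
The plan is to reduce every identity to a direct computation on the weight-graded basis $\{\rho_{i_{1},\dots,i_{k},2p-1}\}$ of $X^{\otimes(2p-1)}$, using the explicit formulas defining $\alpha$ and $\beta$ together with the combinatorial relations collected in the Appendix. The organizing observation is that both generators are weight-homogeneous, with $\alpha$ raising the weight index by $p$ and $\beta$ lowering it by $p$, and that $X^{\otimes(2p-1)}$ only carries weight indices $0,\dots,2p-1$. This grading alone proves \eqref{eq:1}, since $\alpha^{2}$ would send $X_{k,2p-1}$ into $X_{k+2p,2p-1}=0$, and dually for $\beta^{2}$; it also guarantees that the composites in \eqref{eq:2}, \eqref{eq:3} and \eqref{eq:7} land in the expected weight space, so that only their scalar values remain to be determined.

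For the scalar relations I would evaluate $\alpha\beta\alpha$, $\beta\alpha\beta$ and $\alpha\beta+\beta\alpha$ on a general basis vector $\rho_{i_{1},\dots,i_{k},2p-1}$. The essential mechanism is that $\alpha(\rho)$ and $\beta(\rho)$ are scalar multiples of the single weight vectors $E^{p-k-1}x_{2p-1}$ and $F^{k-p}x_{0}$; to apply the next generator one re-expands these in the $\rho$-basis, applies the defining formula termwise, and resums. The resulting sums are $q$-binomial identities that I expect to coincide with the Appendix relations, and carrying them out simultaneously yields the common scalar and pins down $\gamma=(-1)^{p-1}([p-1]!)^{2}$. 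Matching $\alpha\beta+\beta\alpha$ against the explicit Jones--Wenzl formula $\rho\mapsto q^{(\cdots)}\frac{[n-k]!}{[n]!}F^{k}x_{0}$ at $n=2p-1$ to obtain \eqref{eq:7} is the most delicate of these, as one must reconcile two independently normalized expressions across all weight spaces at once.

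A short formal consequence of \eqref{eq:1}, \eqref{eq:2} and \eqref{eq:7} is $f_{2p-1}\alpha=\alpha f_{2p-1}=\alpha$ (and likewise for $\beta$, using that $\gamma\neq 0$), which is convenient downstream. The cup--cap relations \eqref{eq:8} can then be checked directly: $\cup_{i}\alpha=0$ because $\cup_{i}$ is a module map and $\cup(\nu_{11})=0$ kills $x_{2p-1}$, while $\alpha\cap_{i}=0$ because the two terms of $\cap(\nu)=q^{-1}\nu_{10}-\nu_{01}$ carry weight exponents differing by exactly one power of $q$, so that the $q^{-1}$-weighted combination cancels; the $\beta$ versions use $\cup(\nu_{00})=0$ and the identical exponent cancellation. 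The shifted identities \eqref{eq:9}--\eqref{eq:12} follow by reindexing both sides on the contracted basis. For the overlap vanishings \eqref{eq:4} and the commutations \eqref{eq:5}, \eqref{eq:6} I would argue locally: when $\lvert i-j\rvert<p$ the two windows share at least $p$ strands, and the image of the inner generator lies, inside the outer window, in a weight space sent to zero by the outer generator; when the windows are exactly $p$ apart the two orders produce the same extreme-weight vector. These overlap arguments are the technical heart of this group of identities.

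Finally, the rotational relations \eqref{eq:13} and \eqref{eq:14} I would deduce from the symmetric exponent $k(2p-1)-\tfrac{1}{2}(k^{2}-k)-\sum_{j}i_{j}$, which is built precisely so that a full annular rotation $R_{4p-2}$, permuting the tensor factors cyclically and rescaling by $K$-weights, acts as the identity on $\alpha$ and $\beta$. The linear dependences \eqref{eq:15} and \eqref{eq:16} I expect to be the main obstacle: here $\alpha\otimes 1$ lives on $2p$ strands, and one must show that its $4p$ rotations $R^{i}_{4p}(\alpha\otimes 1)$ satisfy the two-parameter family of relations with coefficients $k_{i}=(-1)^{i}[i-2]k_{1}+(-1)^{i}[i-1]k_{2}$. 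I would verify this by evaluating both sides on each basis vector and reducing the resulting alternating sum of $q$-numbers to a telescoping identity; combining this telescoping with the simultaneous bookkeeping of cyclic rotation and weight shift is where the real work lies.
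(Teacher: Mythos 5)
You should know at the outset that the paper itself contains no proof of this theorem: it is quoted as already established, with the text stating that these relations ``were proven in \cite{GST, mepaper}.'' So there is no internal argument to compare against, and your proposal must stand on its own. Much of it does: the grading argument correctly gives \eqref{eq:1} (since $\alpha$ vanishes on $X_{k,2p-1}$ for $k\geq p$ and sends $X_{k,2p-1}$ into $X_{k+p,2p-1}$); the plan of re-expanding $E^{p-k-1}x_{2p-1}$ and $F^{k-p}x_{0}$ in the $\rho$-basis via \eqref{eq:A11}, \eqref{eq:A12} and resumming with \eqref{eq:A17} does yield \eqref{eq:2}, \eqref{eq:3} and \eqref{eq:7} --- the sums collapse to $q$-binomials, with $\binom{2p-1}{k}_{q}=(-1)^{k}$ and $[k]!\,[p-k-1]!=[p-1]!$ at $q=e^{i\pi/p}$ producing exactly $\gamma=(-1)^{p-1}([p-1]!)^{2}$; and your two-pronged argument for \eqref{eq:8} (module-map property of $\cup_{i}$ killing $x_{2p-1}$ or $x_{0}$, versus exponent cancellation against $\cap(\nu)=q^{-1}\nu_{10}-\nu_{01}$) is correct.

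The genuine gap is at \eqref{eq:4}--\eqref{eq:6}. Your claim that for $\lvert i-j\rvert<p$ ``the image of the inner generator lies, inside the outer window, in a weight space sent to zero by the outer generator'' is false, and no grading argument can prove \eqref{eq:4}, because the vanishing depends on the root-of-unity value of $q$. Concretely, take $p=2$, $q=i$, and act on $\nu_{0000}\in X^{\otimes 4}$. Using $\alpha(\nu_{000})=Ex_{3}=q^{-2}\nu_{011}+q^{-1}\nu_{101}+\nu_{110}$, $\alpha(\nu_{001})=\nu_{111}$, $\alpha(\nu_{010})=q\nu_{111}$, and $\alpha(\nu_{011})=0$, one finds
\begin{align*}
\alpha_{2}(\nu_{0000})&=\nu_{0}\otimes\alpha(\nu_{000})=q^{-2}\nu_{0011}+q^{-1}\nu_{0101}+\nu_{0110},\\
\alpha_{1}\alpha_{2}(\nu_{0000})&=q^{-2}\,\alpha(\nu_{001})\otimes\nu_{1}+q^{-1}\,\alpha(\nu_{010})\otimes\nu_{1}+\alpha(\nu_{011})\otimes\nu_{0}
=\left(q^{-2}+1\right)\nu_{1111}.
\end{align*}
The first two terms produced by $\alpha_{2}$ restrict, on the outer window (strands $1$--$3$), to weight index $1<p$, so the outer $\alpha$ does \emph{not} annihilate them; the product vanishes only because $q^{-2}+1=0$ at $q=e^{i\pi/2}$. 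So \eqref{eq:4} requires a coefficient-cancellation computation of the same kind as your argument for $\alpha\cap_{i}=0$, carried out across the overlap of the two windows (via the coproduct formula \eqref{eq:A6} to split $E^{m}x_{2p-1}$ across the two windows), and the same objection applies to your ``same extreme-weight vector'' claim for \eqref{eq:5}, \eqref{eq:6}, where the coefficients on both sides must actually be matched. Separately, for \eqref{eq:13}--\eqref{eq:16} you offer only expectations (``I expect\dots telescoping''), so the relations you yourself identify as the hardest are not proved; as written, the proposal establishes roughly half the theorem and would fail on the overlap identities.
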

It was conjectured in \cite{mepaper} that these generators and relations fully described the $\bar{U}_{q}(\mathfrak{sl}_{2})$ planar algebra. \cite{mepaper} also gives diagrammatic versions of the above relations.

\section{The Projections onto Indecomposable Representations}\label{projections}
We denote the projections onto $\mathcal{P}^{+}_{i}$ and $\mathcal{P}^{-}_{i}\oplus\mathcal{P}^{-}_{i}$ by $P^{+}_{i}$ and $P^{-}_{i}\oplus P^{-}_{i}$ respectively. For all diagrams in the following we use thick lines to denote multiple strings. The aim of this section is to prove the following:
\begin{thm}\label{ind proj}
	The projections onto $\mathcal{P}^{+}_{i}$ and $\mathcal{P}^{-}_{i}\oplus\mathcal{P}^{-}_{i}$ for $1\leq i\leq p-1$ can be given diagrammatically by the following:
	\begin{figure}[H]
		\centering
		\includegraphics[width=0.5\linewidth]{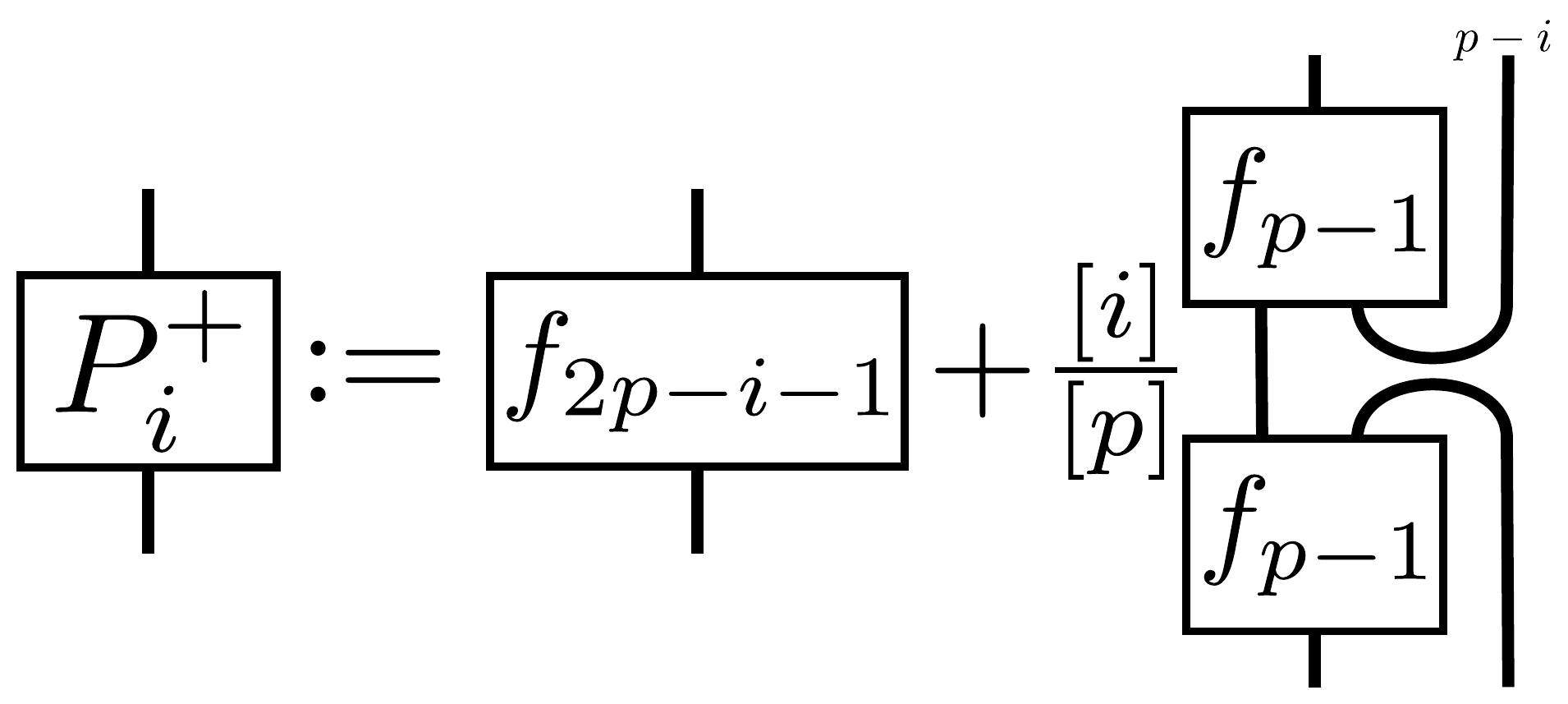}
	\end{figure}
	\begin{figure}[H]
		\centering
		\includegraphics[width=0.6\linewidth]{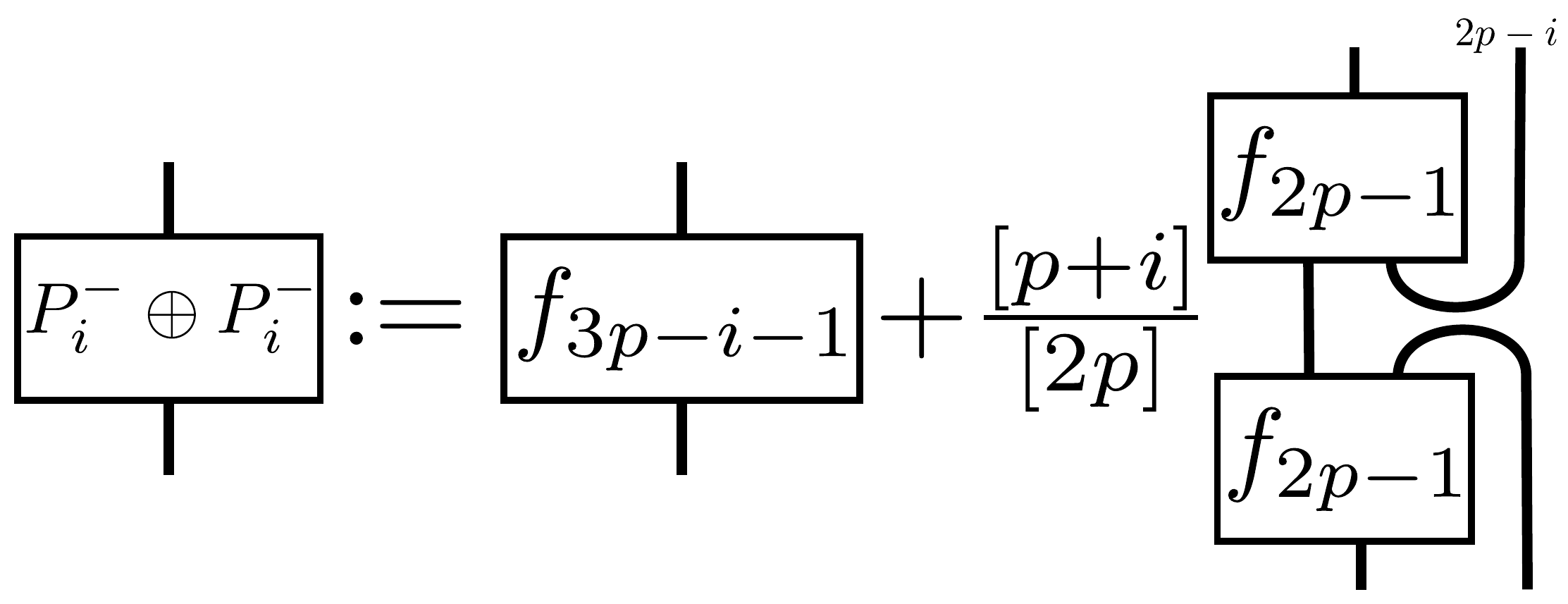}
	\end{figure}
	Relations \ref{eq:4} and \ref{eq:7} allow the projection onto $\mathcal{P}^{-}_{i}\oplus\mathcal{P}^{-}_{i}$ to be split into two orthogonal projections on $\mathcal{P}^{-}_{i}$.
\end{thm}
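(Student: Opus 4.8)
The plan is to exploit relation \eqref{eq:7}, which writes $\gamma f_{2p-1}=\alpha\beta+\beta\alpha$, in order to break the single idempotent that carries the multiplicity-two factor into two orthogonal pieces, and then to check that this splitting survives the surrounding diagrammatic context of $P^{-}_{i}\oplus P^{-}_{i}$.

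First I would establish the purely algebraic statement. Since $[k]\neq 0$ for $1\leq k\leq p-1$, we have $\gamma=(-1)^{p-1}([p-1]!)^{2}\neq 0$, so I may set $e_{+}:=\gamma^{-1}\alpha\beta$ and $e_{-}:=\gamma^{-1}\beta\alpha$. Relation \eqref{eq:2} gives $(\alpha\beta)^{2}=\alpha(\beta\alpha\beta)=\gamma\,\alpha\beta$, hence $e_{+}^{2}=e_{+}$, and \eqref{eq:3} gives $e_{-}^{2}=e_{-}$ symmetrically. Relation \eqref{eq:1} gives $e_{+}e_{-}=\gamma^{-2}\alpha\beta^{2}\alpha=0$ and $e_{-}e_{+}=\gamma^{-2}\beta\alpha^{2}\beta=0$, so the two idempotents are orthogonal, while \eqref{eq:7} gives $e_{+}+e_{-}=f_{2p-1}$. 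Thus $f_{2p-1}$ is a sum of two orthogonal idempotents.

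Next I would transport this to the full projection. Writing $P^{-}_{i}\oplus P^{-}_{i}=\Phi(f_{2p-1})$, where $\Phi$ denotes the fixed diagrammatic context (cups, Jones--Wenzl boxes, and any further $\alpha,\beta$ insertions) surrounding the $\gamma f_{2p-1}$ factor that the diagram contains, substitution gives $P^{-}_{i}\oplus P^{-}_{i}=\Phi(e_{+})+\Phi(e_{-})=:Q_{+}+Q_{-}$. I would then verify that $Q_{+},Q_{-}$ remain orthogonal idempotents: composing $Q_{\pm}$ with itself or with the other brings the inner factors adjacent, where $e_{\pm}^{2}=e_{\pm}$ and $e_{+}e_{-}=e_{-}e_{+}=0$ collapse the product, while relation \eqref{eq:4} annihilates the neighbouring $\alpha,\beta$ strands of $\Phi$ that are thereby brought within distance $<p$, killing all spurious cross terms. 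Finally, since $P^{-}_{i}\oplus P^{-}_{i}$ projects onto $\mathcal{P}^{-}_{i}\oplus\mathcal{P}^{-}_{i}$ and $\operatorname{End}(\mathcal{P}^{-}_{i})$ is two-dimensional and local---its only idempotents being $0$ and $1$, so that $\mathcal{P}^{-}_{i}$ is indecomposable---Krull--Schmidt forces each nonzero summand $\operatorname{im}Q_{\pm}$ to be a single copy of $\mathcal{P}^{-}_{i}$, giving the claimed pair of orthogonal projections.

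The hard part will be the middle step, namely controlling $\Phi$: the clean identities $e_{\pm}^{2}=e_{\pm}$ and $e_{+}e_{-}=0$ only yield the splitting immediately when $\Phi$ composes multiplicatively on the two idempotents, and establishing this is the genuine diagrammatic content. One must check that stacking two copies of the projection brings exactly the right strands within distance $<p$ for \eqref{eq:4} to fire (with \eqref{eq:5} and \eqref{eq:6} handling the commuting far-apart $\alpha,\beta$ pairs), so that no mixed term pairing $\alpha\beta$ from one copy with $\beta\alpha$ from the other at a shifted position can survive. Verifying that the context is transparent in precisely this sense is where the argument will demand the most care.
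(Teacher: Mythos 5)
There is a genuine gap: your proposal proves only the final, secondary sentence of the theorem and assumes the main one. The principal assertion is that the two displayed diagrams \emph{are} the projections onto $\mathcal{P}^{+}_{i}$ and $\mathcal{P}^{-}_{i}\oplus\mathcal{P}^{-}_{i}$; you take this as given (``since $P^{-}_{i}\oplus P^{-}_{i}$ projects onto $\mathcal{P}^{-}_{i}\oplus\mathcal{P}^{-}_{i}$\dots''), and you never discuss the positive case $P^{+}_{i}$ at all. Establishing that claim requires three things, none of which appear in your argument. First, well-definedness: the diagrams are built from Jones--Wenzl projections $f_{z}$ with $z$ up to $2p-1$, whose coefficients have quantum integers in denominators that vanish at $q=e^{i\pi/p}$; the paper devotes a lemma to showing that the relevant partial closure $A_{z,i}$ never produces $[z+1]$ in a denominator, so the formulae are finite at the root of unity. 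Second, idempotency of the displayed formulae. Third, and most substantially, identification of the image: the paper proceeds inductively up the fusion rules, exhibiting for each step (e.g.\ $\mathcal{P}^{+}_{i}\otimes X\simeq\mathcal{P}^{+}_{i-1}\oplus\mathcal{P}^{+}_{i+1}$, $\mathcal{P}^{+}_{1}\otimes X\simeq\mathcal{P}^{+}_{2}\oplus 2\mathcal{X}^{-}_{p}$) an explicit isomorphism map $V$ with $V^{T}V$ and $VV^{T}$ the appropriate identities, which is what ties the diagrams to the indecomposable summands. Without these steps the theorem is untouched.

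The part you do prove---that $e_{+}=\gamma^{-1}\alpha\beta$ and $e_{-}=\gamma^{-1}\beta\alpha$ are orthogonal idempotents with $e_{+}+e_{-}=f_{2p-1}$, by relations \ref{eq:1}, \ref{eq:2}, \ref{eq:3} and \ref{eq:7}---is correct, and it is essentially the content of the theorem's closing remark, which the paper treats as immediate rather than as the object of the proof. Even there, your ``transport through the context $\Phi$'' step is left as an acknowledged hole (you call it the hard part and do not carry it out), and your Krull--Schmidt conclusion additionally needs both $Q_{+}$ and $Q_{-}$ to be nonzero and proper, since a direct summand of $\mathcal{P}^{-}_{i}\oplus\mathcal{P}^{-}_{i}$ could a priori be $0$ or the whole module; this can be seen from the weight-space actions of $\alpha\beta$ and $\beta\alpha$, but it should be said. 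In short: the algebraic splitting is fine, but the theorem's main content---finiteness at the root of unity, idempotency, and identification of the images via the fusion-rule isomorphisms---is missing.
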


We give a proof for the positive case, with the negative case following similarly. To prove this, we first need  the following lemma:
\begin{lemma}
	For $1\leq i\leq z$, and $\delta$ arbitrary, the following diagram does not have $[z+1]$ appearing in the denominator of the coefficient of any element:
	\begin{figure}[H]
		\centering
		\includegraphics[width=0.5\linewidth]{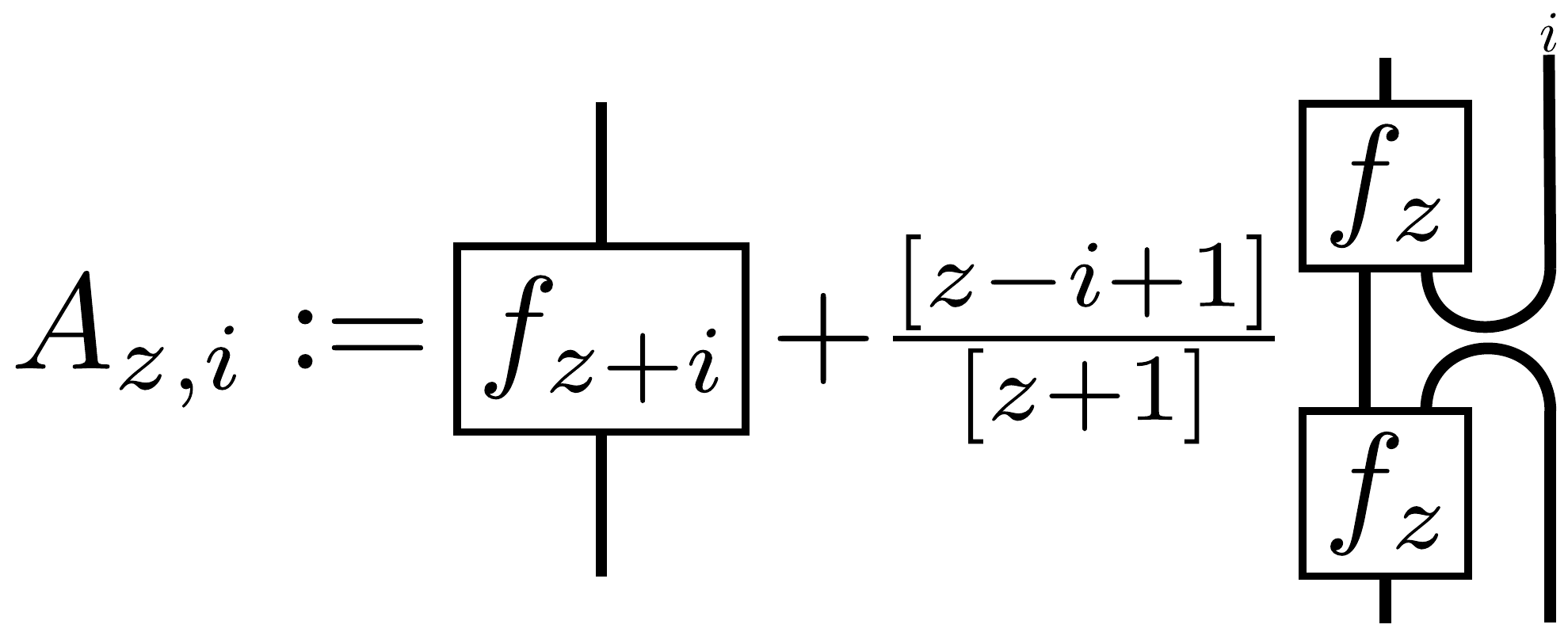}
	\end{figure}
\end{lemma}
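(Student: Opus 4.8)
The plan is to reduce the whole diagram to the explicit Jones--Wenzl formula and then simply track where denominators can arise. Working over $\mathbb{K}(\delta)$ with $\delta=q+q^{-1}=[2]$ treated as a formal parameter, the diagram in question is assembled from a single Jones--Wenzl box $f_{z}$ together with the elementary cup and cap maps $\cup_{i},\cap_{i}$ (and possibly a closed loop at position $i$). The first observation is that the \emph{only} source of denominators is the box $f_{z}$ itself: by the explicit formula $\rho_{i_{1},\dots,i_{k},z}\mapsto q^{(\cdots)}\frac{[z-k]!}{[z]!}F^{k}x_{0,z}$, every matrix coefficient of $f_{z}$ is a Laurent polynomial in $q$ times a rational function whose denominator divides $[z]!=[1][2]\cdots[z]$.

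Next I would record that the remaining pieces introduce no new denominators at all. The maps $\cup$ and $\cap$ act on basis elements with coefficients in $\{0,\pm q,\pm 1,q^{-1}\}$, and each closed loop contributes only the scalar $\delta=[2]$. Consequently, expanding the full diagram on any basis vector yields, for the coefficient of a fixed output element, a finite sum of products of one $f_{z}$-coefficient with several $\cup/\cap$-coefficients. Placing such a sum over a common denominator shows that every coefficient is a Laurent polynomial in $q$ divided by a factor of $[z]!$. Since $[z+1]$ is not one of $[1],\dots,[z]$ --- equivalently, writing $[n]$ up to a unit as $\prod_{d\mid n,\,d>1}\Phi_{d}(q^{2})$, the cyclotomic factor $\Phi_{z+1}$ never divides $[z]!$ because no $j\leq z$ is a multiple of $z+1$ --- it cannot be a factor of any denominator, which is exactly the claim.

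The step I expect to be the genuine obstacle is controlling the turnback or partial closure at position $i$: a priori, closing a strand of a projection can reintroduce $[z+1]$ into a denominator, since this is precisely how the recursion $f_{z+1}=f_{z}\otimes\lvert-\frac{[z]}{[z+1]}f_{z}\mathbf{e}_{z+1}f_{z}$ manufactures the factor $[z+1]$. The heart of the argument is therefore to confirm that the diagram never reconstitutes $f_{z+1}$ but only ever uses $f_{z}$ and smaller projections, so that a partial trace produces $[z+1]$ in the numerator rather than the denominator. Concretely, I would invoke the bubble identity --- the partial trace of $f_{z}$ over a single strand equals $\frac{[z+1]}{[z]}f_{z-1}$ --- to see that the only way $[z+1]$ can enter is multiplicatively. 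If cleaner bookkeeping is wanted, I would instead run an induction on $z$ through the Jones--Wenzl recursion: the top step of the recursion introduces exactly $[z]$ (never $[z+1]$) into the denominator, and post- and pre-composition with $\cup_{i},\cap_{i}$ preserves the property, completing the induction.
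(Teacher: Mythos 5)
There is a genuine gap, and it comes from misidentifying the diagram. The diagram $A_{z,i}$ in the lemma is not ``a single box $f_{z}$ together with cups and caps'': it is the Jones--Wenzl projection $f_{z+i}$ with $i$ of its strands bent around (this is why the paper's proof speaks of ``rewriting in terms of $f_{z+i}$'' and of expanding $f_{z+i}$ in terms of $f_{z}$ for the base cases $i=1,2$). Consequently your first observation --- that every denominator divides $[z]!$, so $[z+1]$ can never appear --- is false for the actual object: the coefficients of $f_{z+i}$ have denominators dividing $[z+i]!$, which \emph{does} contain $[z+1]$ as soon as $i\geq 1$. The entire content of the lemma is that these $[z+1]$ factors cancel once the $i$ strands are bent; this is also what makes the lemma useful, since at $q=e^{i\pi/p}$ with $z+1=p$ the projection $f_{z+i}$ itself does not exist, yet the bent diagram $A_{z,i}$ does, and this is what renders the indecomposable projection formulae of Theorem \ref{ind proj} well defined. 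With the diagram read correctly, your cyclotomic argument proves nothing, because $\Phi_{z+1}$ certainly divides $[z+i]!$.

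You do sense the real difficulty in your final paragraph, but the proposed resolutions do not engage with it. The bubble identity concerns the partial trace of $f_{z}$ and is irrelevant here, and the claim that ``the diagram never reconstitutes $f_{z+1}$ but only ever uses $f_{z}$ and smaller projections'' is exactly backwards: the diagram is built from $f_{z+i}$, which via the recursion contains $f_{z+1},\dots,f_{z+i-1}$ and all of their denominators. Likewise an induction on $z$ through the recursion does not close the argument. The paper instead inducts on $i$, the number of bent strands: writing $f_{z+i+1}$ in terms of $f_{z+i}$ expresses $A_{z,i+1}$ through $A_{z,i}$-type diagrams, and the dangerous coefficients combine into
\begin{align*}
\frac{[z+i][z-i+1]-[z-i+2][z+i+1]}{[z+1]}=-q^{z+1}-q^{-z-1},
\end{align*}
an explicit cancellation of $[z+1]$ which, together with the inductive hypothesis, is what actually proves the claim. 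Any correct proof must contain a cancellation of this kind; a pure ``bookkeeping of denominators'' argument cannot succeed.
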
	
\begin{proof}	
	We note first that applying cups or caps to $A_{z,i}$ doesn't necessarily give zero, instead we have:
	\begin{figure}[H]
		\centering
		\includegraphics[width=0.7\linewidth]{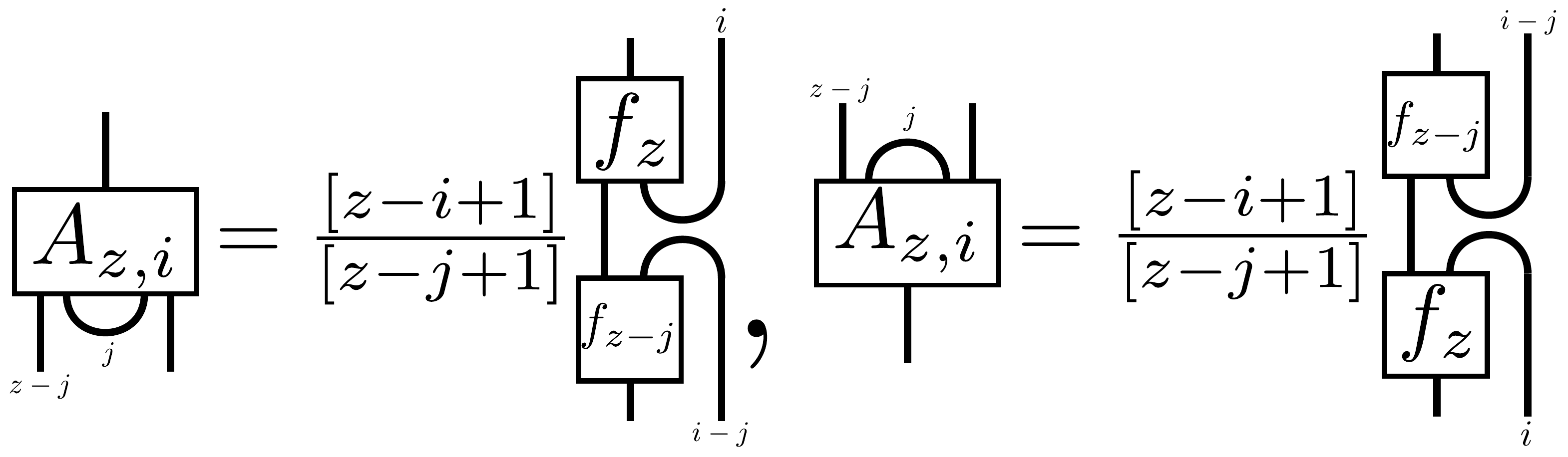}
	\end{figure}
	We proceed by induction. The cases $i=1,2$, can be shown directly by expanding $f_{z+i}$ in terms of $f_{z}$ and simplifying, as shown in Section $6$ of \cite{methesis}. Assume now that it is true for $i$, then rewriting in terms of $f_{z+i}$ and using the inductive formula for Jones-Wenzl projections, we get:
	\begin{figure}[H]
		\centering
		\includegraphics[width=1\linewidth]{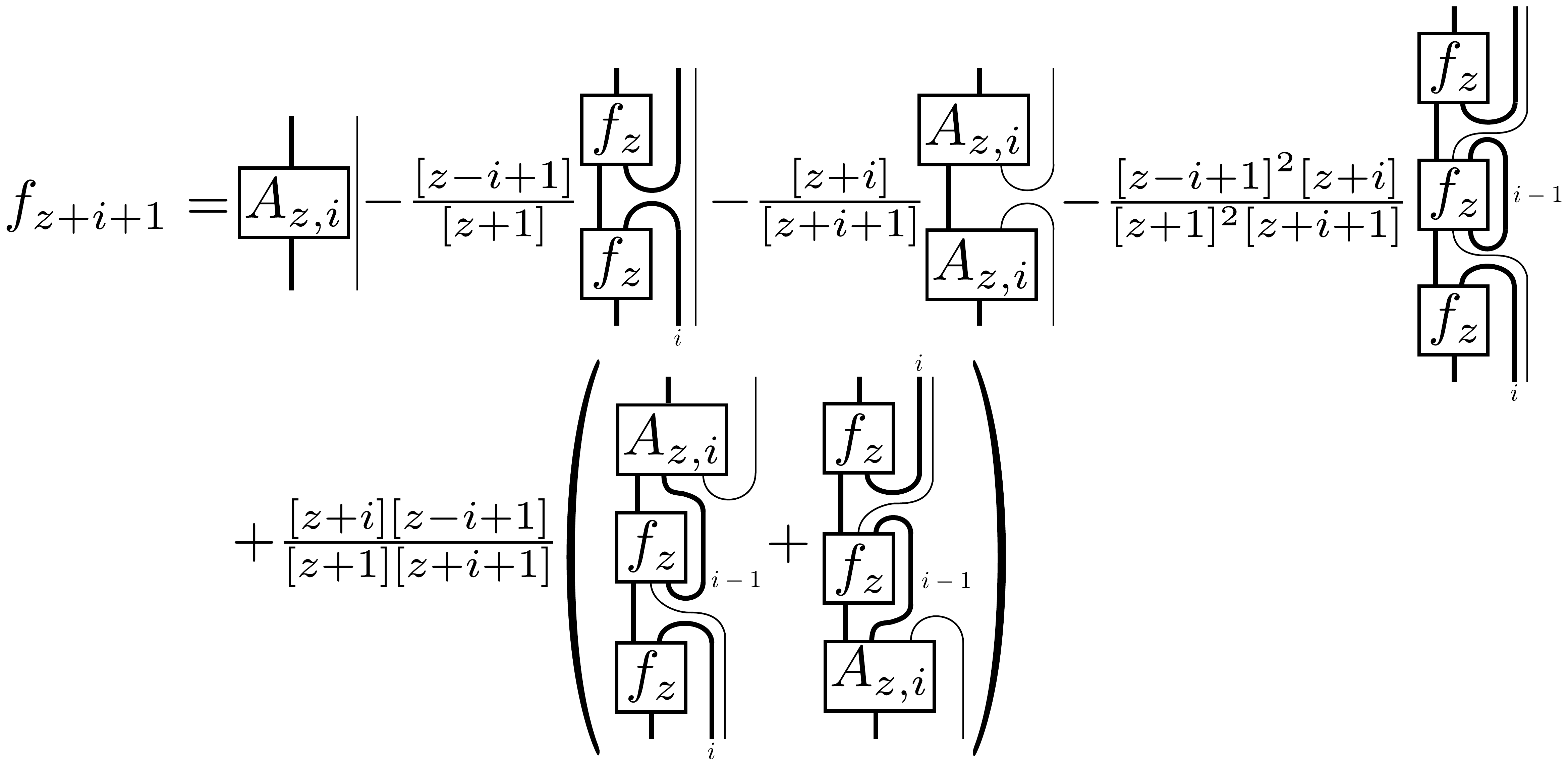}
	\end{figure}
	Simplifying this, we get:
	\begin{figure}[H]
		\centering
		\includegraphics[width=1\linewidth]{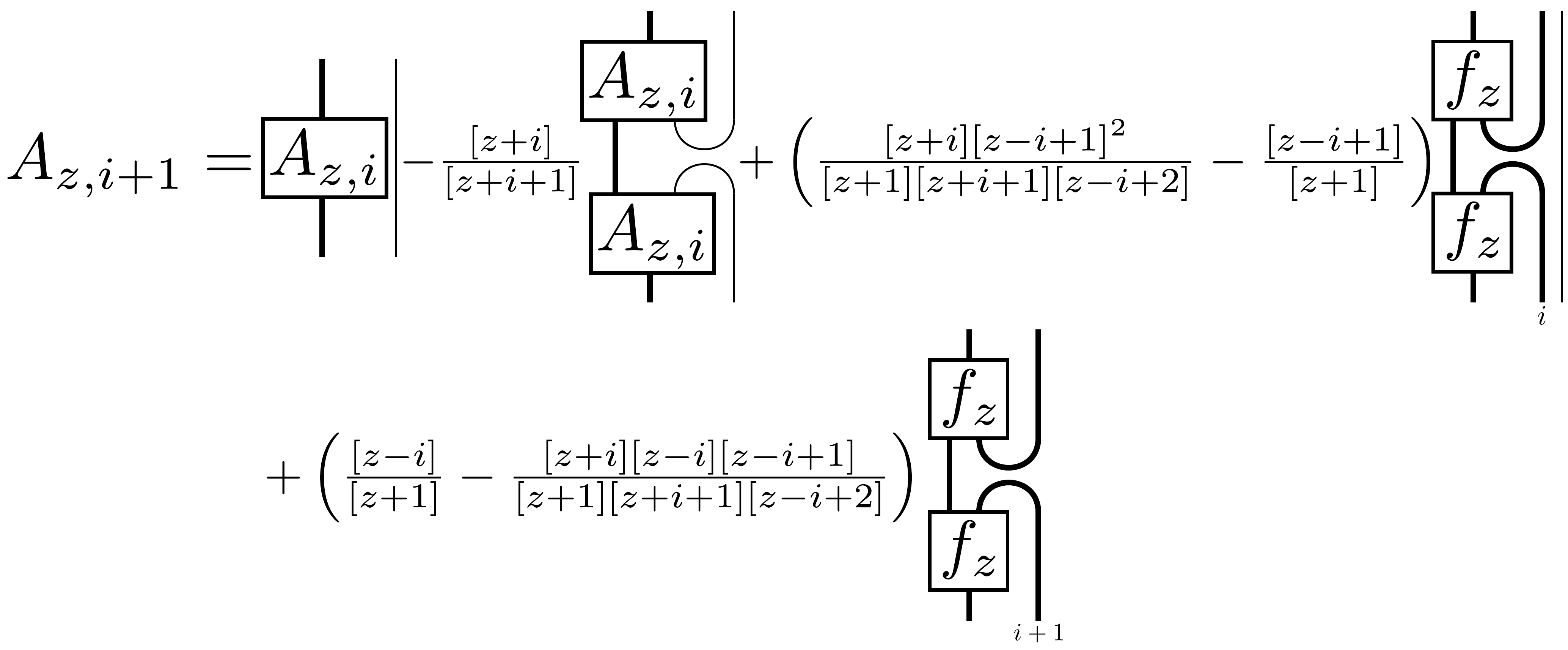}
	\end{figure}
	We now want to simplify the coefficients of the above diagram. We have:
	\begin{align*}
	\frac{([z+i][z-i+1]-[z-i+2][z+i+1])}{[z+1]}&=\frac{-(q^{2z+2}-q^{-2z-2})}{q^{z+1}-q^{-z-1}}=-q^{z+1}-q^{-z-1}
	\end{align*}
	Hence we have:
	\begin{figure}[H]
		\centering
		\includegraphics[width=1\linewidth]{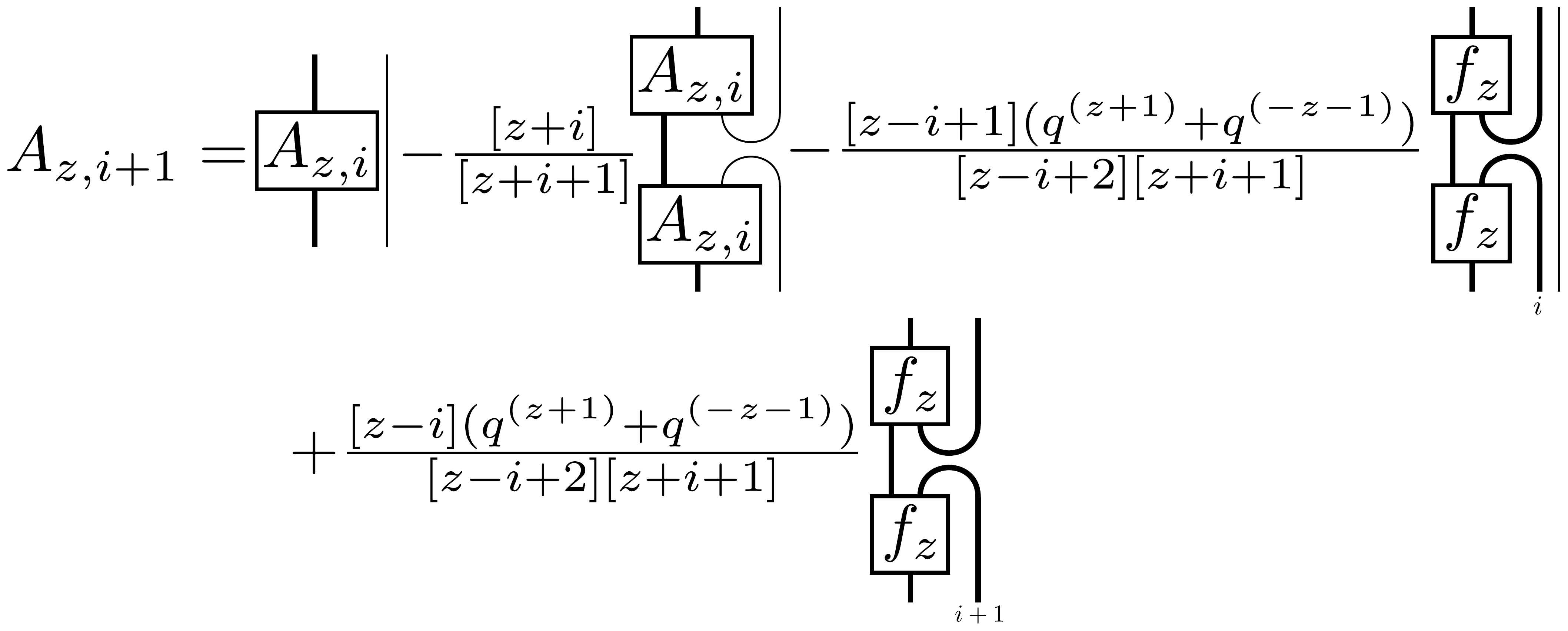}
	\end{figure}
	which doesn't contain $[z+1]$ in the denominator.	
\end{proof}
Note that this inductive formula for $A_{z,i+1}$ only holds for $i\geq 2$. Using the above result, we get the following inductive formula for the projections for $i\leq p-2$:
\begin{figure}[H]
	\centering
	\includegraphics[width=0.9\linewidth]{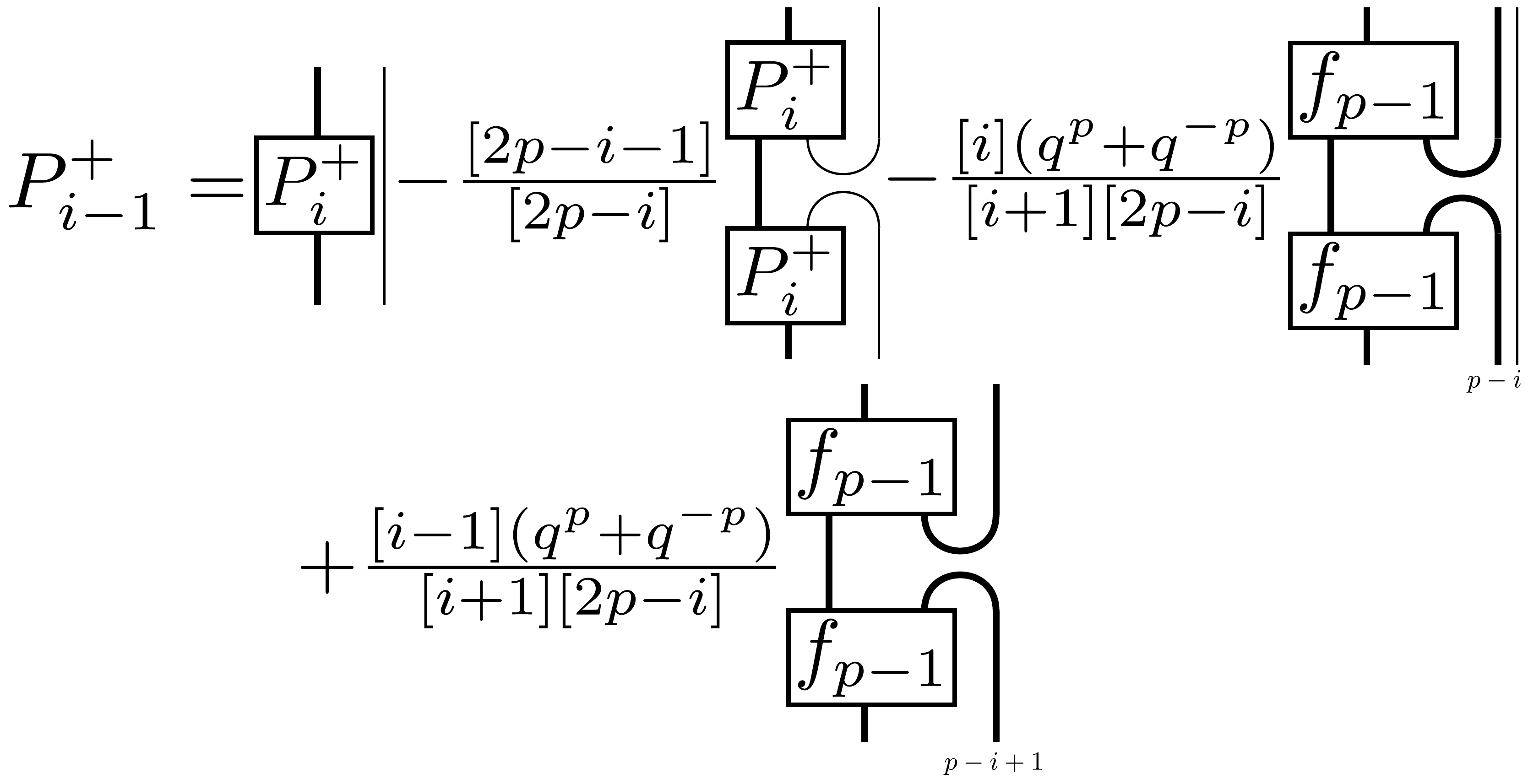}
\end{figure}
Next we want to state the following basic properties of the projections:
\begin{lemma}
	The indecomposable projections satisfy the following:
	\begin{figure}[H]
		\centering
		\includegraphics[width=0.3\linewidth]{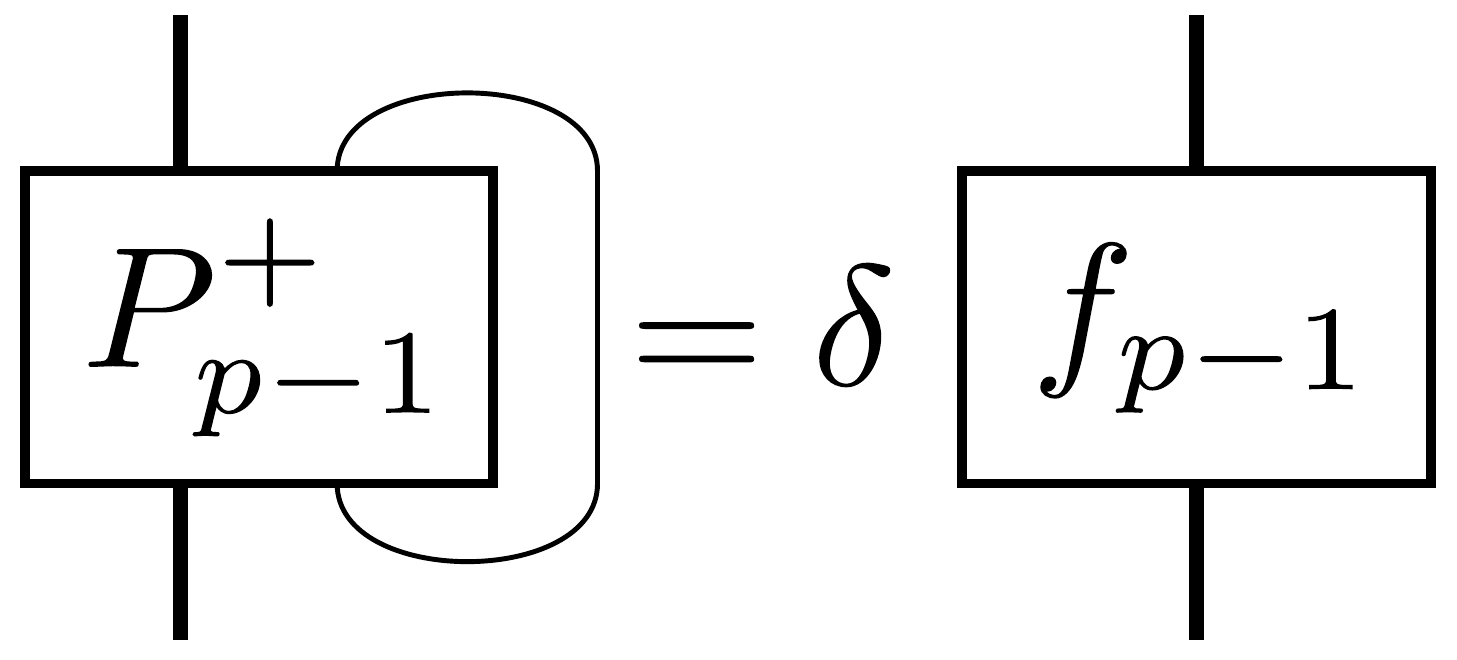}
	\end{figure}
	\begin{figure}[H]
		\centering
		\includegraphics[width=0.9\linewidth]{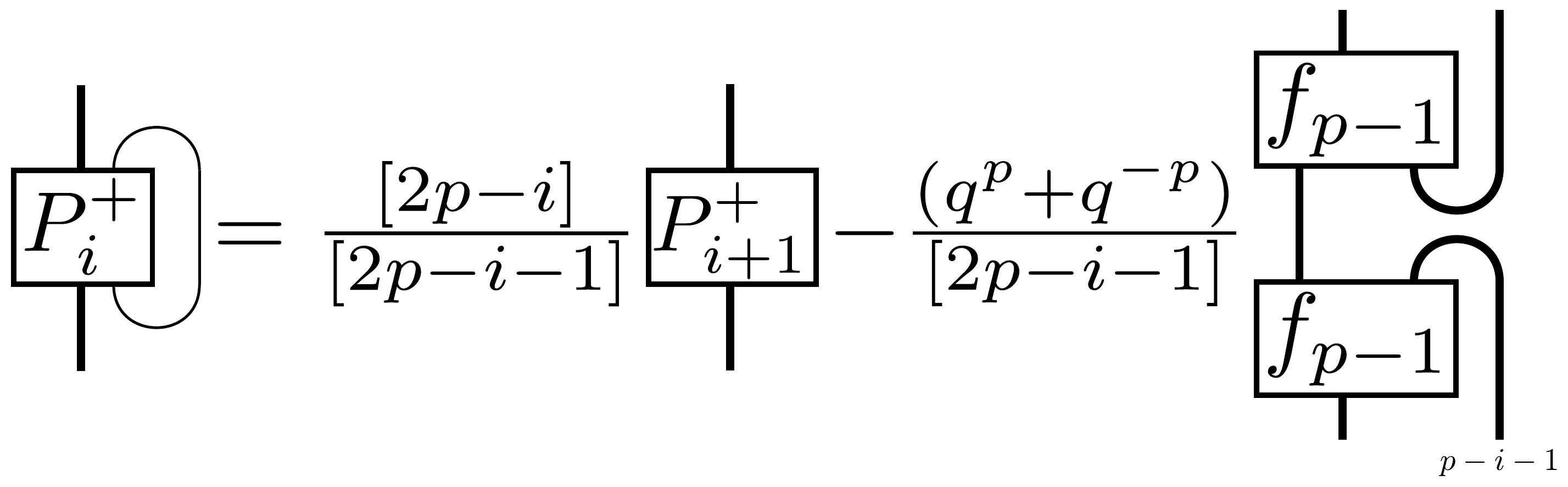}
	\end{figure}
	\begin{figure}[H]
		\centering
		\includegraphics[width=1\linewidth]{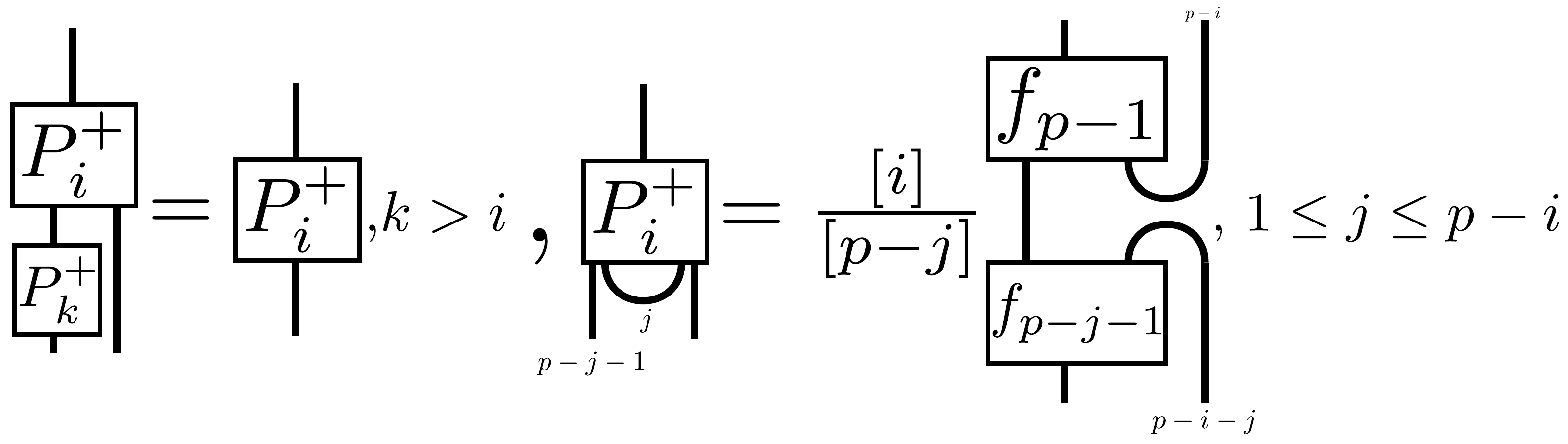}
	\end{figure}

	along with their reflections about the horizontal axis. 
\end{lemma}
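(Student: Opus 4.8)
The plan is to verify each displayed identity by induction on $i$, driving the induction with the inductive formula for $P^{+}_{i}$ established just above, and supplying at each step the algebraic relations of Theorem \ref{thm} together with the denominator control from the preceding lemma. Because every property is asserted alongside its reflection about the horizontal axis, it is enough to treat the ``upper'' version of each: the planar relations \ref{eq:8}--\ref{eq:12} and \ref{eq:1}--\ref{eq:7} are themselves symmetric under this reflection (caps exchanged with cups, $\alpha\beta$ with $\beta\alpha$), so the reflected identity follows from the reflected argument verbatim.

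First I would dispatch the annihilation-type identities --- those stating that a cup or cap on adjacent outer strands of $P^{+}_{i}$ is zero, and that $P^{+}_{i}$ absorbs the relevant Jones-Wenzl projections. Reading off the inductive formula, the projection is assembled from blocks $f_{\bullet}$ and the generators $\alpha,\beta$. A boundary cap either lands on a Jones-Wenzl block, where it dies by the standard absorption $f_{n}\mathbf{e}_{j}=0$, or on an $\alpha/\beta$ block, where it dies by \ref{eq:8}. At the seams between an $f$-block and an $\alpha/\beta$-block one first slides the cap using \ref{eq:9}--\ref{eq:12} into a block that annihilates it. The inductive hypothesis then closes the argument for the newly added strands.

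Idempotency, $P^{+}_{i}\circ P^{+}_{i}=P^{+}_{i}$, is the crux and the step I expect to cause the most trouble. Expanding one factor by the inductive formula produces a diagonal term and several cross-terms, and the cross-terms are governed by the composites $\alpha^{2},\beta^{2},\alpha\beta\alpha,\beta\alpha\beta$ and $\alpha\beta+\beta\alpha$, which relations \ref{eq:1}--\ref{eq:3} and \ref{eq:7} collapse to $0$, $\gamma\alpha$, $\gamma\beta$, and $\gamma f_{2p-1}$ respectively, with $\gamma=(-1)^{p-1}([p-1]!)^{2}$. The genuinely delicate point is that the scalar coefficients must recombine to return $P^{+}_{i}$ with coefficient exactly one; this is where the previous lemma earns its place, since it certifies that the factor $[z+1]$ --- which vanishes at the root of unity and would be fatal in a denominator --- never in fact appears. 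I would arrange the computation so that the purely-$f$ part reduces by the known idempotency of the Jones-Wenzl projection, the $\alpha/\beta$-decorated part telescopes through \ref{eq:1}--\ref{eq:3}, and the mixed term is folded back into a multiple of $f_{2p-1}$ by \ref{eq:7} and then absorbed by the surrounding Jones-Wenzl projections.

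The last family of identities, relating $P^{+}_{i}$ directly to $\alpha$ and $\beta$, follows from the same toolkit: one pushes a generator through the projection using \ref{eq:2}, \ref{eq:3}, and the annihilation relations just proved, and confirms that the scalar $\gamma$ appears with the stated multiplicity. The conceptual work is thus entirely carried by Theorem \ref{thm} and the no-$[z+1]$-denominator lemma; the main obstacle is the coefficient bookkeeping in the idempotency induction, which must be executed with care at the positions where the $f$-blocks abut the $\alpha/\beta$-blocks.
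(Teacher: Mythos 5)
You should know at the outset that the paper never proves this lemma: the properties are simply asserted (``Next we want to state the following basic properties of the projections''), and the closest thing to a justification anywhere in the paper is the deferral, inside the proof of Theorem \ref{ind proj}, of the projection property to Section 6 of the author's thesis. So your proposal cannot be checked against a printed argument; it has to be judged on its own terms, and on those terms it has a genuine gap.

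The surrounding text pins down what this lemma must contain and how it is used. The sentence immediately after it (``left and right partial traces of the indecomposable projections are not equal'') and the later appeal, in the proof of Theorem \ref{ind proj}, to ``the results about the inductive formula for the projections and their partial trace given previously'' show that the full-width third display consists of partial-trace identities for $P^{+}_{i}$, and that these — together with the cap/cup annihilation and Jones--Wenzl absorption — are exactly the properties the main theorem consumes when verifying the isomorphism maps. Your proposal never computes a partial trace. This is not a cosmetic omission: partial closures are not reachable by the relation-pushing toolkit you list, since closing a strand of the projection requires evaluating partial traces of the Jones--Wenzl blocks (ratios $[n+1]/[n]$, which need root-of-unity care because $[p]=0$ at $q=e^{i\pi/p}$) and of the $\beta\alpha$-decorated blocks, about which relations \ref{eq:1}--\ref{eq:12} say nothing; indeed the resulting left/right asymmetry is a phenomenon that a purely symmetric, relation-based argument like yours cannot produce. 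Relatedly, you identify idempotency as the crux, but idempotency is evidently not part of this lemma at all — the paper handles it separately in the proof of Theorem \ref{ind proj} (``That the formulae are projections is straightforward to show\dots''), citing the thesis rather than this lemma. So the parts of your sketch that are sound (annihilation via \ref{eq:8}--\ref{eq:12} and absorption, with reflections handled by symmetry of the relations) address the easier displays, while the one family of identities the paper actually needs from this lemma is left untouched, and filling it in requires explicit computations — partial closures of $\alpha$, $\beta$, and $f_{n}$ at the root of unity, most naturally done on the weight-space/basis description of the generators — that your cited relations do not supply.
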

We note that in general, left and right partial traces of the indecomposable projections are not equal.
Given these relations, we can now proceed to prove Theorem \ref{ind proj}. 
\begin{proof}
	The previous lemma shows that our formulae is finite when evaluated for our choice of $q$. That the formulae are projections is straightforward to show, and is given in Section $6$ of \cite{methesis}. To show that they are the correct projections, we give explicit formula for isomorphism maps for the appropriate fusion rules in the $\bar{U}_{q}(\mathfrak{sl}_{2})$ representation category, i.e. for an isomorphism $A\simeq B$, we have a map $V$ such that $V^{T}V$ is the identity on $A$, and $VV^{T}$ is the identity on $B$, where $V^{T}$ denotes the transpose of the vector $V$, with diagram elements reflected about the horizontal axis.\\
	The fusion rule $\mathcal{X}^{+}_{p}\otimes X\simeq \mathcal{P}^{+}_{p-1}$ is immediate.\\
	For $p=2$, the fusion rule $\mathcal{P}^{+}_{1}\otimes X\simeq 2\mathcal{P}^{+}_{2}\oplus 2\mathcal{P}^{-}_{2}$ has the following isomorphism map:
	\begin{figure}[H]
		\centering
		\includegraphics[width=0.2\linewidth]{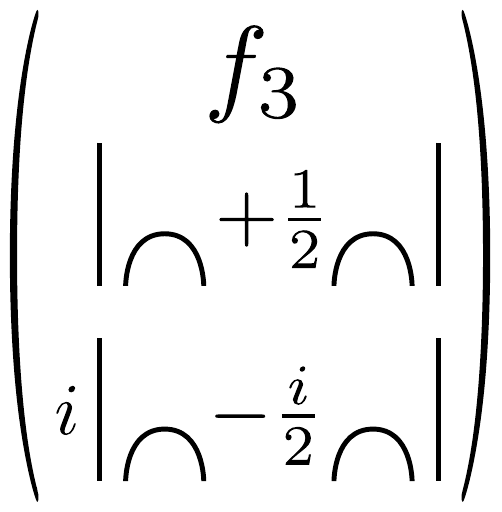}
	\end{figure}
	For $p>2$, we have the following fusion rules and isomorphism maps:
	$\mathcal{P}^{+}_{p-1}\otimes X\simeq\mathcal{P}^{+}_{p-2}\oplus 2\mathcal{P}^{+}_{p}$ has isomorphism map:
	\begin{figure}[H]
		\centering
		\includegraphics[width=0.3\linewidth]{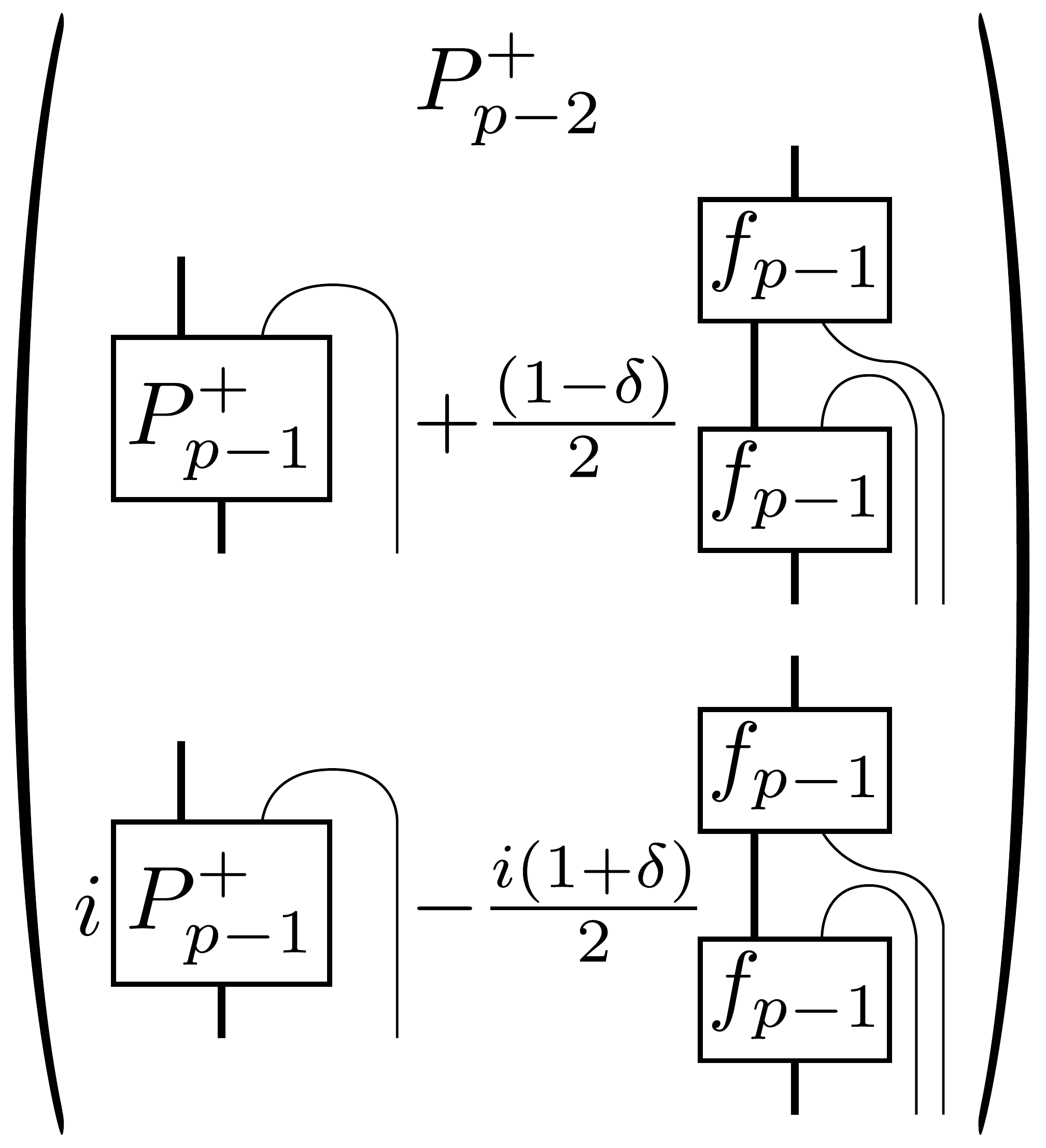}
	\end{figure}
	The fusion rule $\mathcal{P}^{+}_{i}\otimes X\simeq\mathcal{P}^{+}_{i-1}\oplus\mathcal{P}^{+}_{i+1}$, $2\leq i\leq p-2$, has isomorphism map:
	\begin{figure}[H]
		\centering
		\includegraphics[width=0.5\linewidth]{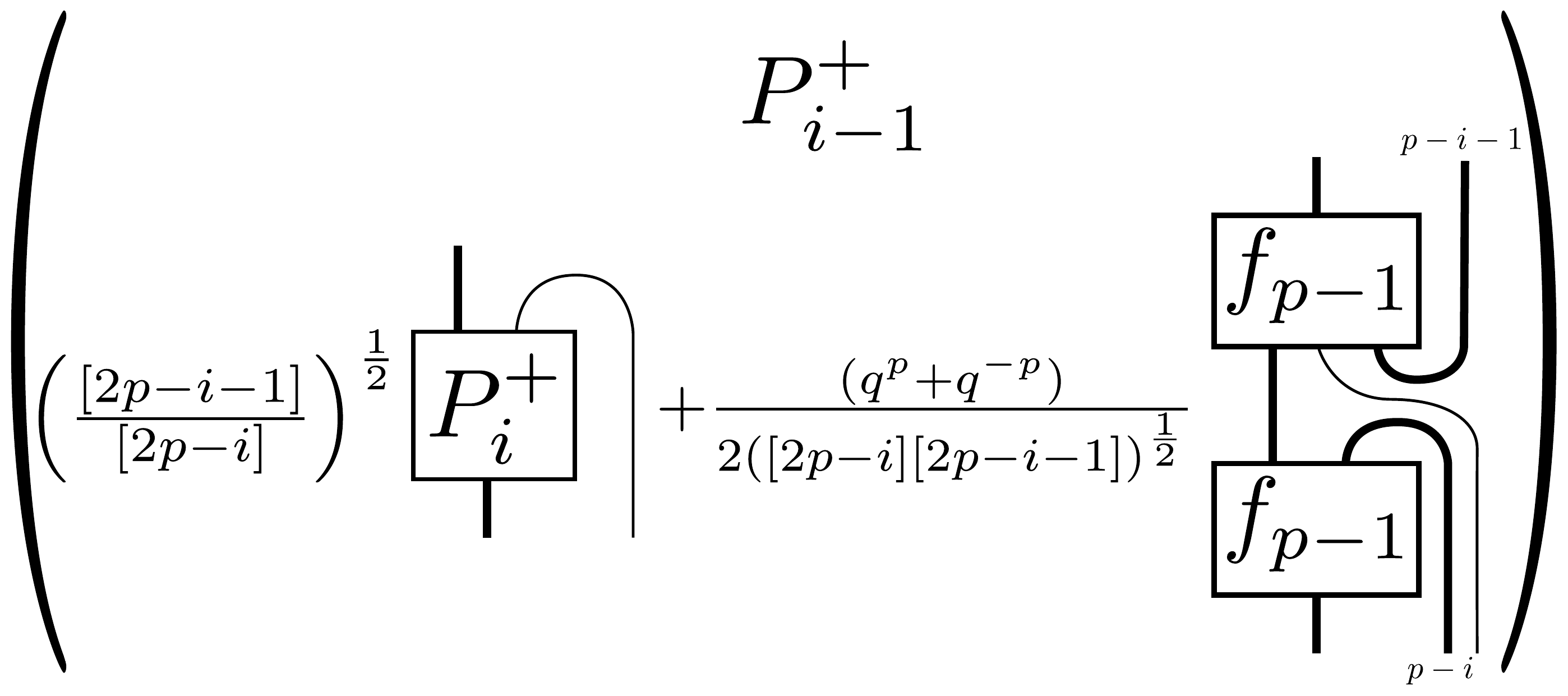}
	\end{figure}
	The fusion rule $\mathcal{P}^{+}_{1}\otimes X\simeq \mathcal{P}^{+}_{2}\oplus 2\mathcal{P}^{-}_{p}$ has isomorphism map:
	\begin{figure}[H]
		\centering
		\includegraphics[width=0.5\linewidth]{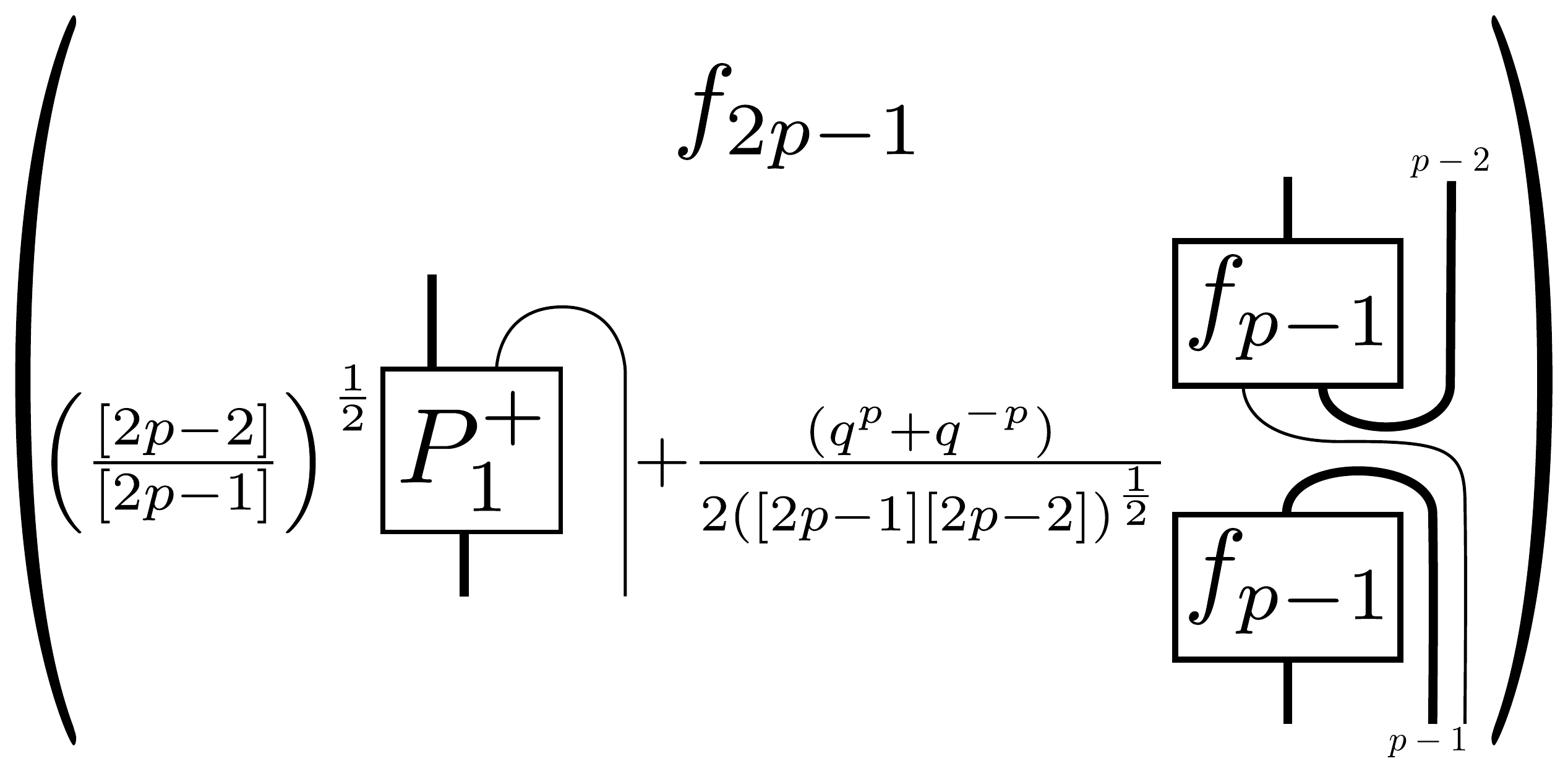}
	\end{figure}
	The $p=2$ case, and next two isomorphism maps are straightforward to verify, based on the results about the inductive formula for the projections and their partial trace given previously. For the last map, we have:
	\begin{figure}[H]
		\centering
		\includegraphics[width=0.5\linewidth]{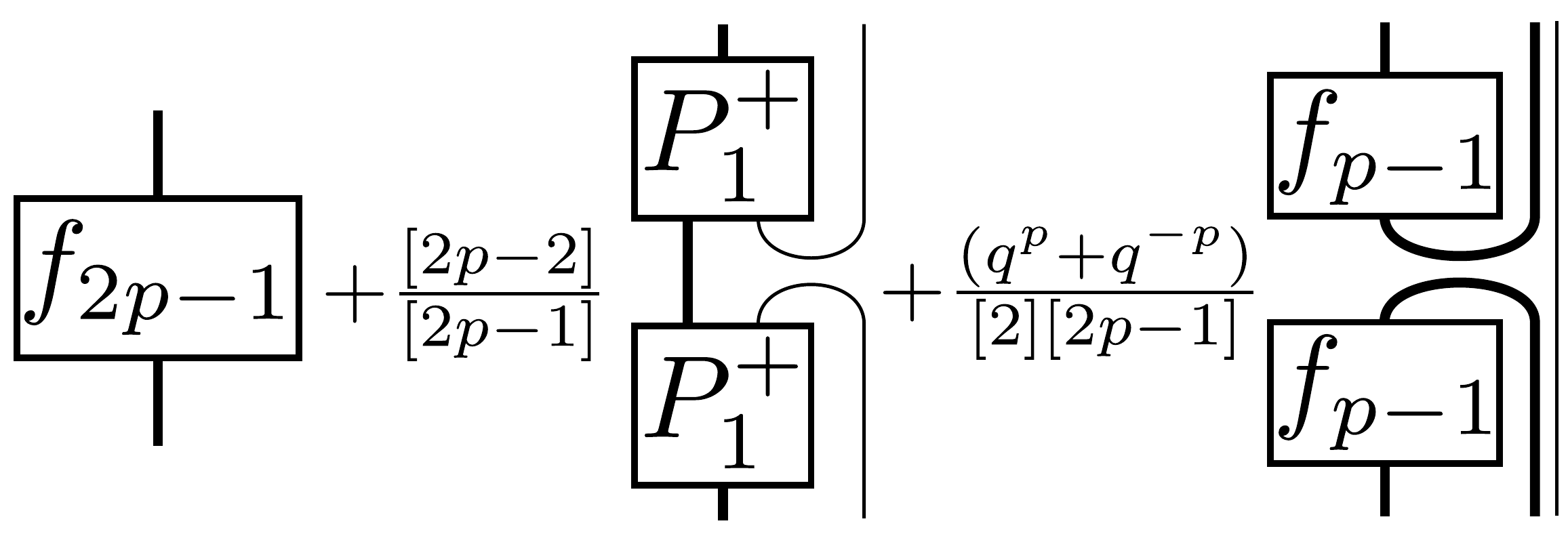}
	\end{figure}
	\begin{figure}[H]
		\centering
		\includegraphics[width=1\linewidth]{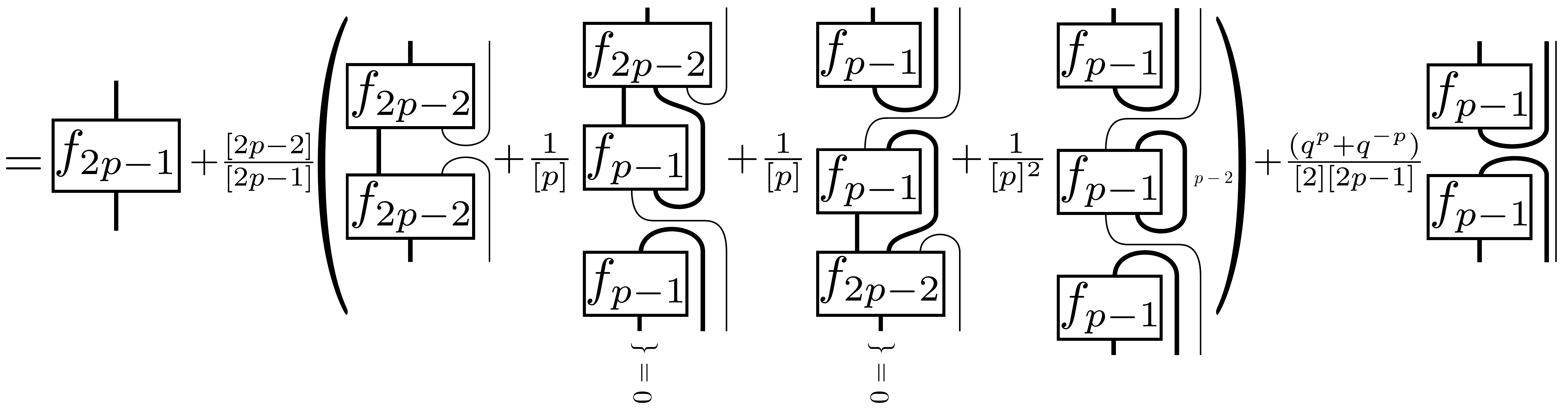}
	\end{figure}
	\begin{figure}[H]
		\centering
		\includegraphics[width=0.9\linewidth]{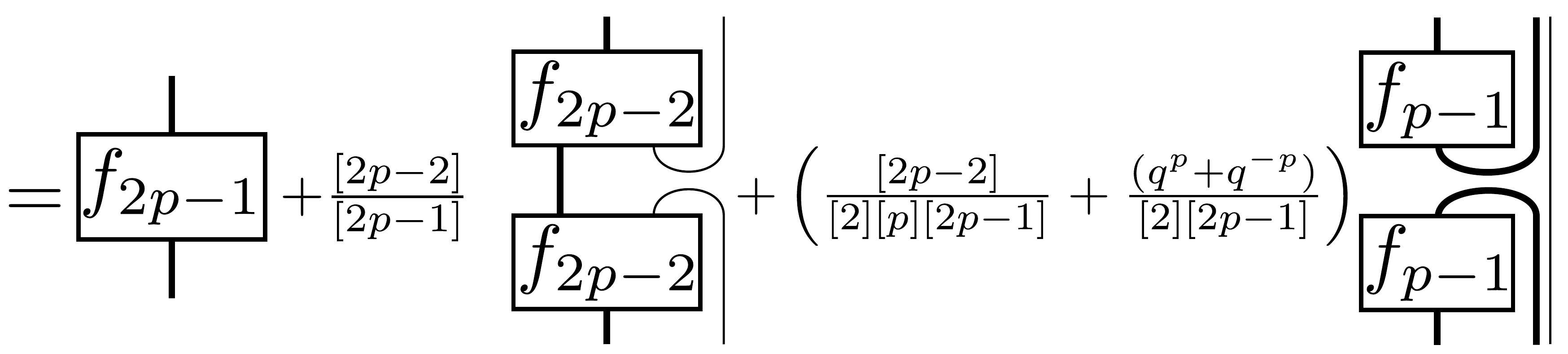}
	\end{figure}
	\begin{figure}[H]
		\centering
		\includegraphics[width=0.4\linewidth]{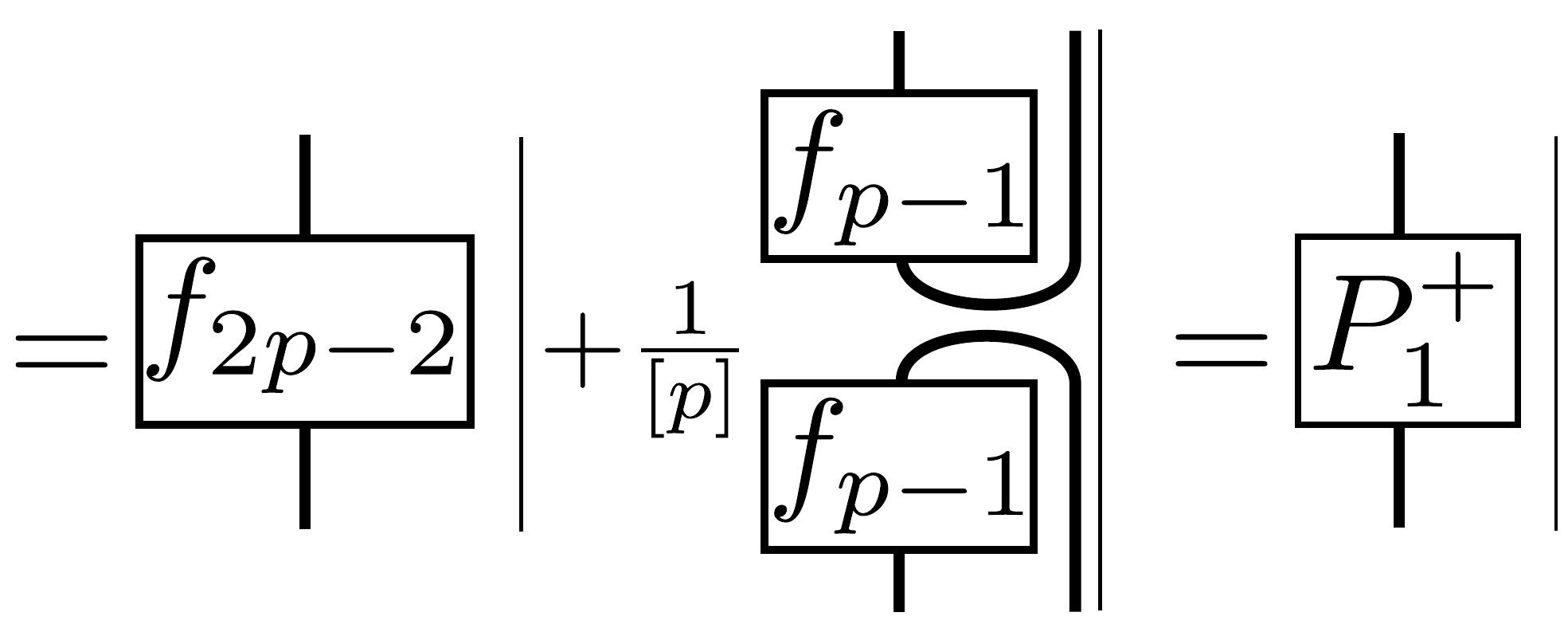}
	\end{figure}
	where for the last simplification, we used the following:
	\begin{align*}
	\frac{[2p-2]+[p](q^{p}+q^{(-p)})}{[2][2p-1]}&=\frac{q^{(2p-2)}-q^{(2-2p)}+q^{2p}-q^{(-2p)}}{q^{2p}+q^{2p-2}-q^{2-2p}-q^{-2p}}=1
	\end{align*}

\end{proof}

\begin{remark}
The projections and isomorphism maps depend on a choice made about the fusion rule $\mathcal{X}^{+}_{p}\otimes X$, as we have $\mathcal{X}^{+}_{p}\otimes X\simeq X\otimes\mathcal{X}^{+}_{p}$. We further chose to continue tensoring by $X$ on the right, i.e. $((((\mathcal{X}^{+}_{p}\otimes X)\otimes X)...)\otimes X)$, however we can also choose to tensor by $X$ on the left or some mix of left and right. In these cases, the appropriate diagram for the projection is given by replacing the second diagram in the original projection formula with one such that a number of the cupped strings are to the left. The proof still holds in these cases as the Jones-Wenzl projections are symmetric under reflection about the vertical axis. It turns out that even sums of these projections can themselves be projections. This can be thought of as being due to a non-unique choice for the assignments $X^{\otimes 2p-i-1}\rightarrow a_{k}$ and $b_{k}\rightarrow X^{\otimes 2p-i-1}$ in the map $X^{\otimes 2p-i-1}\rightarrow \mathcal{P}^{+}_{i}\rightarrow X^{2p-i-1}$. 
\end{remark}

\section{Other Morphisms between Indecomposable Modules}\label{morphisms}
The remainder of this paper is devoted to the non-identity maps on indecomposable modules. Namely, the maps between $\mathcal{P}^{+}_{i}$ and $\mathcal{X}^{+}_{i}$, the maps between $\mathcal{P}^{+}_{i}$ and $\mathcal{P}^{-}_{p-i}$, and the second endomorphisms on $\mathcal{P}^{\pm}_{i}$. We begin with the following:
\begin{prop}
Up to a constant, the homomorphism \begin{align*}
\theta:X^{\otimes 2p-i-1}\rightarrow \mathcal{P}^{+}_{i}\rightarrow \mathcal{X}^{+}_{i}\rightarrow X^{\otimes i-1}
\end{align*} is given diagrammatically by:
\begin{figure}[H]
	\centering
	\includegraphics[width=0.25\linewidth]{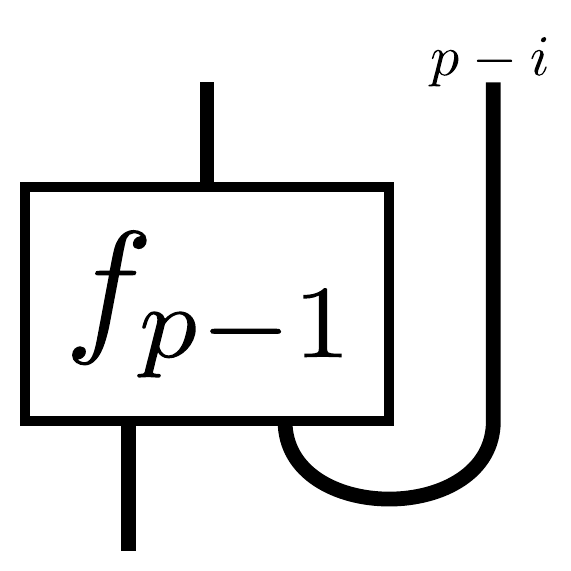}
\end{figure}
\end{prop}
\begin{proof}
In terms of basis elements, using Sections $4.3$ and $6.2$ of \cite{methesis}, the homomorphism is a composition of the following maps:
\begin{align*}
X^{\otimes 2p-i-1}\rightarrow \mathcal{P}^{+}_{i}:& \rho_{i_{1},...,i_{n},2p-i-1}\mapsto (-1)^{p-i-1}q^{\big(n(2p-i-1)-\frac{1}{2}(n^{2}-n)-(\sum\limits_{j=1}^{n}i_{j})\big)}\times\\
&\times\frac{([n]!)([p-n-1]!)}{([n+i-p]!)([i-1]!)([p-i-1]!)^{2}}b_{n+i-p}, \:\: p-i\leq n\leq p-1\\
\mathcal{P}^{+}_{i}\rightarrow \mathcal{X}^{+}_{i}:& x_{j}\mapsto 0, \:\:\: a_{k}\mapsto 0, \:\:\: y_{j}\mapsto 0, \:\:\: b_{k}\mapsto z_{k}, \:\: 0\leq k\leq i-1\\
\mathcal{X}^{+}_{i}\rightarrow X^{\otimes i-1}:& z_{k}\mapsto F^{k}x_{0,i-1}
\end{align*}
Combining the maps, we get:
\begin{align*}
\theta(\rho_{i_{1},...,i_{n},2p-i-1})=& (-1)^{p-i-1}q^{\big(n(2p-i-1)-\frac{1}{2}(n^{2}-n)-(\sum\limits_{j=1}^{n}i_{j})\big)}\times\\
&\times\frac{([n]!)([p-n-1]!)}{([n+i-p]!)([i-1]!)([p-i-1]!)^{2}}F^{n+i-p}x_{0,i-1}\\
& p-i\leq n\leq p-1\\
\theta(\rho_{i_{1},...,i_{n},2p-i-1})=& 0, \:\: 0\leq n< p-i, \:\: p-1< n\leq 2p-i-1
\end{align*}
	Given an element $\rho_{i_{1},...,i_{n},2p-i-1}$, we can rewrite it as:
	\begin{align*}
	\rho_{i_{1},...,i_{r},p-1}\otimes\rho_{(i_{r+1}+1-p),...,(i_{n}+1-p),p-i}\in X_{n,2p-i-1}
	\end{align*}   Acting $f_{p-1}\otimes 1^{\otimes p-i}$ on this we get:
	\begin{align*}
	& \sum\limits_{m=0}^{r}q^{\big(r(p-1)-\frac{1}{2}(r^{2}-r)+2m(r-m)-m(i-1)-(\sum\limits_{j=1}^{r}i_{j})\big)}\frac{([p-1-r]!)}{([p-1]!)}\lambda_{m,r}\times\\
	&\times(F^{r-m}x_{0,i-1})\otimes(F^{m}x_{0,p-i})\otimes \rho_{(i_{r+1}+1-p),...,(i_{n}+1-p),p-i}
	\end{align*}
	We now want to apply $(p-i)$ copies of $\cup$ to this. Remembering our definition of the Temperley-Lieb action, we have $\cup(\nu_{00})=\cup(\nu_{11})=0$, $\cup(\nu_{10})=1$, $\cup(\nu_{01})=-q$. This means that the number of zeros in $(F^{m}x_{0,p-i})$ must be equal to the number of ones in $\rho_{(i_{r+1}+1-p),...,(i_{n}+1-p),p-i}$. Hence we need $p-i-m=n-r$, and so $m=p-i-n+r$. (Note that as $m\leq r$, this requires $n\geq p-i$ to be non-zero). This then gives:
	\begin{align*}
	& \sum\limits_{1\leq k_{l}\leq p-i}\frac{([p-1-r]!)([p-i-n+r]!)}{([p-1]!)}\lambda_{p-i-n+r,r}\times\\
	&\times q^{\big(r(p-1)-\frac{1}{2}(r^{2}-r)+(p-i-n+r)(2n+i-2p+1)+\frac{1}{2}(p-i-n+r)(p-i-n+r+1)-(\sum\limits_{j=1}^{r}i_{j})-(\sum\limits_{l=1}^{p-i-n+r}k_{l})\big)}\times\\
	&\times(F^{n+i-p}x_{0,i-1})\otimes\rho_{k_{1},...,k_{p-i-n+r},p-i}\otimes \rho_{(i_{r+1}+1-p),...,(i_{n}+1-p),p-i}
	\end{align*}
	Denote the positions of the zeros in $\rho_{k_{1},...,k_{p-i-n+r},p-i}$ by $\tilde{k}_{1},...,\tilde{k}_{n-r}$. To apply $\cup$ $(p-i)$ times to $\rho_{k_{1},...,k_{p-i-n+r},p-i}\otimes\rho_{(i_{r+1}+1-p),...,(i_{n}+1-p),p-i}$, we need that\\ $(p-i+1-\tilde{k}_{n-r})=(i_{r+1}+1-p),...,(p-i+1-\tilde{k}_{1})=(i_{n}+1-p)$, and so\\ $(n-r)(1-p)+\sum\limits_{j=r+1}^{n}i_{j}=(n-r)(p-i+1)-(\sum\limits_{l=1}^{n-r}\tilde{k}_{l})$. However, we also have\\ $\sum\limits_{l=1}^{p-i-n+r}k_{l}+\sum\limits_{m=1}^{n-r}\tilde{k}_{m}=\sum\limits_{k=1}^{p-i}k=\frac{1}{2}(p-i)(p-i+1)$. Hence we have\\ $\sum\limits_{l=1}^{p-i-n+r}k_{l}=\frac{1}{2}(p-i)(p-i+1)+(n-r)(i-2p)+\sum\limits_{j=r+1}^{n}i_{j}$.\\ 
	We can now apply $\cup$ $(p-i)$ times to get:
	\begin{align*}
	& (-1)^{p-i-1}q^{\big(n(2p-i-1)-\frac{1}{2}(n^{2}-n)-i-(\sum\limits_{j=1}^{n}i_{j})\big)}\frac{([p-1-r]!)([r]!)}{([p-1]!)([n+i-p]!)}F^{n+i-p}x_{0,i-1}
	\end{align*}
	Note that as $[p-j]=[j]$, we have $([r]!)([p-r-1]!)=[p-1]!=([n]!)([p-n-1]!)$. Hence this is equal to:
	\begin{align*}
	-q^{-i}\frac{([p-i-1]!)}{[i]}\theta(\rho_{i_{1},,...,i_{n},2p-i-1})
	\end{align*}
	Note that we included an extra minus sign to account for our definition of $\cup$. Note also, that as we have $F^{n+i-p}x_{0,i-1}$, $n=p$ gives $F^{i}x_{0,i-1}=0$, and so we must have $n\leq p-1$ for this to be non-zero.  
\end{proof}

\begin{prop}
Up to a constant, the homomorphism \begin{align*}
\Gamma: X^{\otimes i-1}\rightarrow\mathcal{X}^{+}_{i}\rightarrow\mathcal{P}^{+}_{i}\rightarrow X^{\otimes 2p-i-1}
\end{align*} is given diagrammatically by:	
	\begin{figure}[H]
		\centering
		\includegraphics[width=0.25\linewidth]{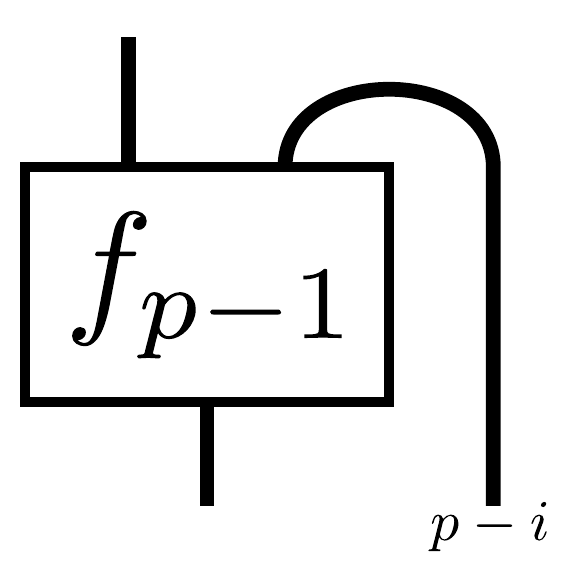}
	\end{figure}
\end{prop}
\begin{proof}
In terms of basis elements this is a composition of the following maps:
\begin{align*}
X^{\otimes i-1}\rightarrow\mathcal{X}^{+}_{i}:&\rho_{i_{1},...i_{n},i-1}\mapsto q^{\big(n(i-1)-\frac{1}{2}(n^{2}-n)-(\sum\limits_{j=1}^{n}i_{j})\big)}\frac{([i-1-n]!)}{([i-1]!)}z_{n}, \:\: 0\leq n\leq i-1\\
\mathcal{X}^{+}_{i}\rightarrow\mathcal{P}^{+}_{i}:& z_{n}\mapsto a_{n}\\
\mathcal{P}^{+}_{i}\rightarrow X^{\otimes 2p-i-1}:& a_{n}\mapsto F^{n+p-i}x_{0,2p-i-1}
\end{align*}
Taking their composition we get:
\begin{align*}
\Gamma(\rho_{i_{1},...,i_{n},i-1})=& q^{\big(n(i-1)-\frac{1}{2}(n^{2}-n)-(\sum\limits_{j=1}^{n}i_{j})\big)}\frac{([i-1-n]!)}{([i-1]!)}F^{n+p-i}x_{0,2p-i-1} \:\: 0\leq n\leq i-1
\end{align*}
	Given $\rho_{j_{1},...,j_{k},z}$, let $\tilde{j}_{1},...,\tilde{j}_{z-k}$ be the positions of the zeros. As $\cap(\nu)=q^{-1}\nu_{10}-\nu_{01}$, we have that the $z$-fold cap is given by:
	\begin{align*}
	\sum\limits_{k=0}^{z}\bigg(\sum\limits_{1\leq j_{l}\leq z}(-1)^{z-k}q^{-k}\rho_{j_{1},...,j_{k},(2z+1-\tilde{j}_{z-k}),...,(2z+1-\tilde{j}_{1}),2z}\bigg)
	\end{align*}
	Hence given $\rho_{i_{1},...,i_{n},i-1}$, we want to consider the action of $(f_{p-1}\otimes 1^{\otimes p-i})$ on the following:
	\begin{align*}
	\sum\limits_{k=0}^{p-i}\bigg(\sum\limits_{1\leq j_{l}\leq p-i}(-1)^{p-i-k}q^{-k}\rho_{i_{1},...,i_{n},i-1}\otimes\rho_{j_{1},...,j_{k},(2p-2i+1-\tilde{j}_{p-i-k}),...,(2p-2i+1-\tilde{j}_{1}),2p-2i}\bigg)
	\end{align*}
	This is given by:
	\begin{align*}
	\sum\limits_{k=0}^{p-i}&\bigg(\sum\limits_{1\leq j_{l}\leq p-i}(-1)^{p-i-k}q^{\big((n+k)(p-1)-\frac{1}{2}(n+k)(n+k-1)-ki-(\sum\limits_{j=1}^{n}i_{j})-(\sum\limits_{l=1}^{k}j_{l})\big)}\times\\
	&\times\frac{([p-1-n-k]!)}{([p-1]!)}(F^{n+k}x_{0,p-1})\otimes\rho_{(p-i+1-\tilde{j}_{p-i-k}),...,(p-i+1-\tilde{j}_{1}),p-i}\bigg)
	\end{align*}
	Note that as $\sum\limits_{l=1}^{k}j_{l}+\sum\limits_{m=1}^{p-i-k}\tilde{j}_{m}=\frac{1}{2}(p-i)(p-i+1)$, we have:
	\begin{align*}
	\sum\limits_{k=0}^{p-i}&\bigg(\sum\limits_{1\leq j_{l}\leq p-i}(-1)^{p-i-k}q^{\big((n+k)(p-1)-\frac{1}{2}(n+k)(n+k-1)-ki-(\sum\limits_{j=1}^{n}i_{j})-\frac{1}{2}(p-i)(p-i+1)+(\sum\limits_{l=1}^{p-i-k}\tilde{j}_{l})\big)}\times\\
	&\times\frac{([p-1-n-k]!)}{([p-1]!)}(F^{n+k}x_{0,p-1})\otimes\rho_{(p-i+1-\tilde{j}_{p-i-k}),...,(p-i+1-\tilde{j}_{1}),p-i}\bigg)\\
	=\sum\limits_{k=0}^{p-i}&\bigg(\sum\limits_{1\leq m_{r}\leq p-1}\sum\limits_{1\leq j_{l}\leq p-i}q^{\big(p(n+k)-ki-(\sum\limits_{j=1}^{n}i_{j})-\frac{1}{2}(p-i)(p-i+1)+(\sum\limits_{l=1}^{p-i-k}\tilde{j}_{l})-(\sum\limits_{r=1}^{n+k}m_{r})\big)}\times\\
	&\times(-1)^{p-i-k}\frac{([p-1-n-k]!)([n+k]!)}{([p-1]!)}\rho_{m_{1},...,m_{n+k},(2p-i-\tilde{j}_{p-i-k}),...,(2p-i-\tilde{j}_{1}),2p-i-1}\bigg)
	\end{align*}
	Let $m_{n+k+1}:=2p-i-\tilde{j}_{p-i-k}$,...,$m_{p-i+n}:=2p-i-\tilde{j}_{1}$, then \\ $\sum\limits_{l=n+k+1}^{p-i+n}m_{l}=(p-i-k)(2p-i)-(\sum\limits_{m=1}^{p-i-k}\tilde{j}_{m})$, and we can rewrite as:
	\begin{align*}
	\sum\limits_{k=0}^{p-i}&\bigg(\sum\limits_{1\leq m_{j}\leq 2p-i-1}q^{\big(np-(\sum\limits_{j=1}^{n}i_{j})-\frac{1}{2}(p-i)(p-i+1)+(p-i)(2p-i)-(\sum\limits_{j=1}^{n+p-i}m_{j})\big)}\times\\
	&\times(-1)^{p-i}\rho_{m_{1},...,m_{n+k},r_{n+k+1},...,m_{n+p-i},2p-i-1}\bigg)\\
	=& \sum\limits_{1\leq m_{j}\leq 2p-i-1}(-1)^{p-i}q^{\big(np-(\sum\limits_{j=1}^{n}i_{j})-\frac{1}{2}(p-i)(p-i+1)+(p-i)(2p-i)-(\sum\limits_{j=1}^{n+p-i}m_{j})\big)}\rho_{m_{1},...m_{n+p-i},2p-i-1}\\
	=& \frac{q^{\big(i(n+1)-p-\frac{1}{2}(n^{2}+n)-(\sum\limits_{j=1}^{n}i_{j})\big)}}{([n+p-i]!)}F^{n+p-i}x_{0,2p-i-1}\\
	=& q^{i}\frac{([i-1]!)}{([p-1]!)}\Gamma(\rho_{i_{1},...,i_{n},i-1})
	\end{align*}
	where again we included an extra minus sign to account for our choice of $\cap$.
\end{proof}

\begin{corr}
The second endomorphism on $\mathcal{P}^{+}_{i}$, is given diagrammatically by:
\begin{figure}[H]
	\centering
	\includegraphics[width=0.25\linewidth]{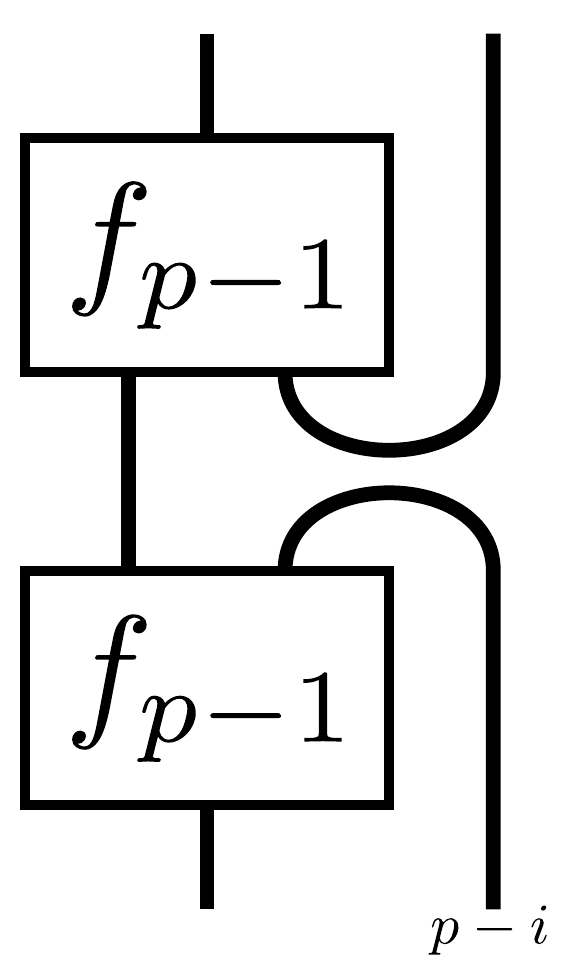}
\end{figure}
\end{corr}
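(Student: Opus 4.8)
The plan is to recognise the second endomorphism as the composite $\Gamma\circ\theta$ of the two homomorphisms just constructed. By the classification of $Hom(\mathcal{P}^+_i,\mathcal{P}^+_i)$ recalled in Section \ref{background}, the non-identity generator of this two-dimensional space is the nilpotent map $b_k\mapsto a_k$ (with $a_k,x_j,y_j\mapsto 0$), which factors as the projection onto the head $\mathcal{P}^+_i\twoheadrightarrow\mathcal{X}^+_i$ followed by the inclusion of the socle $\mathcal{X}^+_i\hookrightarrow\mathcal{P}^+_i$. Realised on $X^{\otimes 2p-i-1}$ it is therefore the four-step composite
\begin{align*}
X^{\otimes 2p-i-1}\rightarrow\mathcal{P}^+_i\rightarrow\mathcal{X}^+_i\rightarrow\mathcal{P}^+_i\rightarrow X^{\otimes 2p-i-1}.
\end{align*}
The diagram in the statement is exactly the diagram for $\Gamma$ stacked above the diagram for $\theta$, so by the previous two propositions it computes $\Gamma\circ\theta$ up to a constant; the corollary is the assertion that this agrees, up to a scalar, with the composite above.

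The single point to check is that the two copies of $\mathcal{X}^+_i$ produced in the middle may be identified. The map $\theta$ finishes with the embedding $\mathcal{X}^+_i\hookrightarrow X^{\otimes i-1}$, $z_k\mapsto F^k x_{0,i-1}$, whereas $\Gamma$ opens with the Jones-Wenzl projection $X^{\otimes i-1}\twoheadrightarrow\mathcal{X}^+_i$. Their composite is an endomorphism of the simple module $\mathcal{X}^+_i$, hence a scalar; using $Fz_k=z_{k+1}$ together with the normalisation $x_{0,i-1}\mapsto z_0$ of the projection one finds this scalar to be $1$, so the detour through $X^{\otimes i-1}$ is the identity on $\mathcal{X}^+_i$. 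Consequently $\Gamma\circ\theta$ is precisely the displayed four-step map, i.e. the second endomorphism up to the constants already carried by the two propositions. Diagrammatically, this identification is the statement that the two Jones-Wenzl projections meeting along the shared $i-1$ strands of the stacked picture amalgamate by $f^2=f$, which is what collapses the composite to the diagram of the statement.

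Finally I would confirm that the result is genuinely the second endomorphism and not a multiple of the identity: its square factors through the socle-to-head composite $\mathcal{X}^+_i\hookrightarrow\mathcal{P}^+_i\twoheadrightarrow\mathcal{X}^+_i$, which is zero, so $\Gamma\circ\theta$ is nilpotent and non-zero and must be a non-trivial scalar multiple of the second endomorphism. The main obstacle is the middle identification—checking that the embedding followed by the Jones-Wenzl projection is exactly the identity on $\mathcal{X}^+_i$ (equivalently, that the two stacked projections merge without an extra scalar or lower-weight correction); granting this, the corollary is a direct assembly of the two preceding propositions.
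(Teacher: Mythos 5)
Your proof is correct and takes essentially the same route as the paper: both identify the diagram as the composite $\Gamma\circ\theta$, reduce the claim to showing that the intermediate map $\mathcal{X}^{+}_{i}\rightarrow X^{\otimes i-1}\rightarrow\mathcal{X}^{+}_{i}$ is the identity, and then use the factorization $\mathcal{P}^{+}_{i}\rightarrow\mathcal{X}^{+}_{i}\rightarrow\mathcal{P}^{+}_{i}$ of the second endomorphism to conclude. Your extra details (the Schur's lemma normalization of the middle scalar and the nilpotency check ruling out a multiple of the identity) simply flesh out what the paper dismisses as ``straightforward to show,'' without changing the argument.
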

\begin{proof}
Taking the composition of the maps of the previous two sections, we have:
\begin{align*}
\Phi=\Gamma\theta:X^{\otimes 2p-i-1}\rightarrow \mathcal{P}^{+}_{i}\rightarrow\mathcal{X}^{+}_{i}\rightarrow X^{\otimes i-1}\rightarrow \mathcal{X}^{+}_{i}\rightarrow \mathcal{P}^{+}_{i}\rightarrow X^{\otimes 2p-i-1}
\end{align*}
It is straightforward to show that the map $\mathcal{X}^{+}_{i}\rightarrow X^{\otimes i-1}\rightarrow \mathcal{X}^{+}_{i}$ is equal to the identity. It then follows that the map $\mathcal{P}^{+}_{i}\rightarrow\mathcal{X}^{+}_{i}\rightarrow\mathcal{P}^{+}_{i}$ is equal to the second endomorphism on $\mathcal{P}^{+}_{i}$. Hence the composition of the two maps is equal to the second endomorphism.
\end{proof}
\begin{remark}
We noted at the end of the previous section that we could define alternate projections by replacing the above diagram with one where some of the strings are on the left hand side instead. We can similarly replace the second endomorphism with these alternate diagrams as well. However at the appropriate value of $\delta$, these are all equal.
\end{remark}

\subsection{The homomorphisms $\mathcal{P}^{\pm}_{i}\leftrightarrow\mathcal{P}^{\mp}_{p-i}$}

The homomorphisms $\theta:\mathcal{P}^{+}_{i}\rightarrow\mathcal{P}^{-}_{p-i}$, $\Gamma:\mathcal{P}^{-}_{p-i}\rightarrow\mathcal{P}^{+}_{i}$, are given as follows:
\begin{align*}
\theta(a_{m})&=0 & \theta(b_{m})&=g_{1}\tilde{x}_{m}+g_{2}\tilde{y}_{m}& \theta(x_{n})&=g_{2}\tilde{a}_{n}& \theta(y_{n})&=g_{1}\tilde{a}_{n}\\
\Gamma(\tilde{a}_{n})&=0 & \Gamma(\tilde{b}_{n})&=k_{1}x_{n}+k_{2}y_{n} & \Gamma(\tilde{x}_{m})&=k_{2}a_{m} & \Gamma(\tilde{y}_{m})&=k_{1}a_{m}
\end{align*}
for $0\leq m\leq i-1$, $0\leq n\leq p-i-1$, and $g_{1},g_{2},k_{1},k_{2}\in\mathbb{K}$. We denote the case $g_{1}=1,g_{2}=0$ by $\theta_{1}$, $g_{1}=0,g_{2}=1$ by $\theta_{2}$, $k_{1}=1,k_{2}=0$ by $\Gamma_{1}$, and $k_{1}=0,k_{2}=1$ by $\Gamma_{2}$.

We want to describe these homomorphisms diagrammatically. For $\theta$, this will be a diagram given by \begin{align*}
\tilde{\theta}_{j}&:X^{\otimes 2p-1-i}\rightarrow \mathcal{P}^{+}_{i}\rightarrow\mathcal{P}^{-}_{p-i}\rightarrow X^{\otimes (2p-1+i)}
\end{align*} 
Note that there are two copies of $\mathcal{P}^{-}_{p-i}$ appearing in $X^{\otimes 2p-1+i}$, one in the weight spaces $X_{0,2p-1+i},\\...,X_{p+i,2p-1+i}$, the other in the weight spaces $X_{p-1,2p-1+i},...,X_{2p-1+i,2p-1+i}$. Denote the maps onto these as $\theta_{j,l}$, $\theta_{j,u}$ respectively, with $j\in\{1,2\}$. In terms of the weight spaces, these maps can be characterized as:
\begin{align*}
\theta_{1,l}&:X_{k,2p-i-1}\rightarrow X_{k+i-p,2p+i-1} & p-i\leq & k\leq 2p-i-1\\
\theta_{2,l}&:X_{k,2p-i-1}\rightarrow X_{k+i,2p+i-1} & 0\leq & k\leq p-1\\
\theta_{1,u}&:X_{k,2p-i-1}\rightarrow X_{k+i,2p+i-1} & p-i\leq & k\leq 2p-i-1\\
\theta_{2,u}&:X_{k,2p-i-1}\rightarrow X_{k+p+i,2p+i-1} & 0\leq & k\leq p-1
\end{align*}
Similarly, $\Gamma$ can be characterized as:
\begin{align*}
\Gamma_{1,l}&:X_{k,2p+i-1}\rightarrow X_{k-i,2p-i-1} & i\leq & k\leq p+i-1\\
\Gamma_{2,l}&:X_{k,2p+i-1}\rightarrow X_{k+p-i,2p-i-1} & 0\leq & k\leq p-1\\
\Gamma_{1,u}&:X_{k,2p+i-1}\rightarrow X_{k-p+i,2p-i-1} & p+i\leq & k\leq 2p+i-1\\
\Gamma_{2,u}&:X_{k,2p+i-1}\rightarrow X_{k-i,2p-i-1} & p\leq & k\leq 2p-1
\end{align*}
\begin{prop}
Diagrammatically, the maps $\theta,\Gamma$ are given by:
\begin{figure}[H]
	\centering
	\includegraphics[width=1\linewidth]{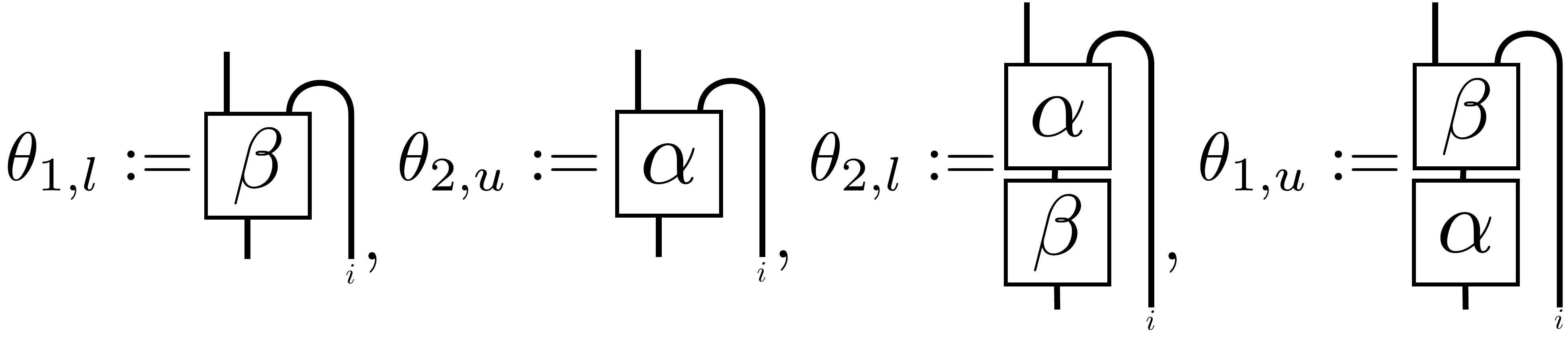}
\end{figure}
\begin{figure}[H]
	\centering
	\includegraphics[width=1\linewidth]{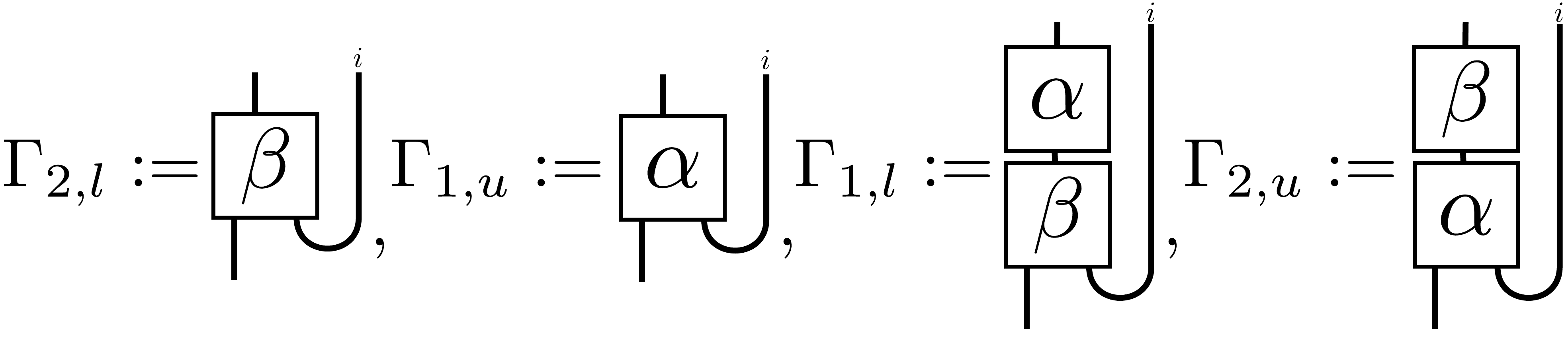}
\end{figure}

\end{prop}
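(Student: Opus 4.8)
The plan is to establish the diagrammatic formulae for $\theta$ and $\Gamma$ by the same strategy used in the two preceding propositions: compute the action of the candidate diagram on an arbitrary basis element $\rho_{i_1,\dots,i_n}$ of the appropriate weight space, and verify that it matches the basis-level formulae for $\theta_{j,l},\theta_{j,u}$ and $\Gamma_{j,l},\Gamma_{j,u}$ recorded above (up to a uniform constant). Since $\theta$ and $\Gamma$ each decompose according to which copy of $\mathcal{P}^{-}_{p-i}$ (lower or upper weight spaces) is targeted, and according to the parameter $j\in\{1,2\}$, I would treat the four maps $\theta_{1,l},\theta_{2,l},\theta_{1,u},\theta_{2,u}$ (and dually the four $\Gamma$ maps) as separate weight-space computations. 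First I would write down explicitly, in terms of $b_m,x_n,y_n$ and $\tilde a_n,\tilde x_m,\tilde y_m$, what each map does, using the module actions for $\mathcal{P}^{\pm}_{s}$ from Section~\ref{background} to translate the abstract maps $\theta(b_m)=g_1\tilde x_m+g_2\tilde y_m$, etc., into explicit coefficients of $F^{(\cdot)}x_{0}$ in the target $X^{\otimes 2p+i-1}$.

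The computation itself would proceed exactly as in the proof of the $\Gamma$ proposition above: I would start from a basis element of $X^{\otimes 2p-i-1}$, apply the generator $\alpha$ or $\beta$ at the appropriate position (these are the maps carrying a weight space up or down by $p$, and they are what realizes the non-split extension structure of the projective modules), then apply a Jones--Wenzl projection $f$ on one block and a cap/cup on the remaining strands, and finally collect the resulting coefficient into the closed form $q^{(\cdots)}\frac{([\cdots]!)}{([\cdots]!)}F^{(\cdots)}x_0$. The weight-shift data tabulated above for $\theta_{j,l},\dots,\Gamma_{2,u}$ tells me precisely which power of $\alpha$ or $\beta$ and which power of $F$ must appear, so each of the eight computations reduces to a factorial-and-$q$-power bookkeeping of the kind already carried out in the previous two proofs. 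I would then match the constants by comparing with the abstract basis formulae, noting—as in the earlier proofs—an extra sign coming from the convention $\cup(\nu_{01})=-q$, $\cap(\nu)=q^{-1}\nu_{10}-\nu_{01}$.

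The main obstacle I anticipate is the correct identification of the \emph{two} copies of $\mathcal{P}^{-}_{p-i}$ inside $X^{\otimes 2p+i-1}$ and the verification that the diagram involving the generator $\alpha$ (resp. $\beta$) lands in the lower (resp. upper) block of weight spaces rather than mixing them. This is controlled by relations \ref{eq:1}--\ref{eq:8}: the nilpotency $\alpha^2=\beta^2=0$ and the annihilation of $\alpha,\beta$ by cups and caps (\ref{eq:8}) are what guarantee that the composite is genuinely a homomorphism of modules and not merely a linear map, and that it respects the submodule/quotient structure distinguishing $b_m$ from $x_n,y_n$. Concretely, the subtle point is that $\theta$ must send the $a$-part of $\mathcal{P}^{+}_{i}$ to zero while sending the socle-and-top appropriately; diagrammatically this is enforced by the placement of the $\alpha$ or $\beta$ generator relative to the Jones--Wenzl box, and checking this placement is consistent across all four weight-space ranges is where the care is needed. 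Once the weight-shift accounting and the $\alpha/\beta$-placement are pinned down, the remaining verification is the routine coefficient matching already illustrated above, and the diagrammatic formulae follow for both $\theta$ and $\Gamma$.
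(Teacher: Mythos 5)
Your overall strategy --- verify the candidate diagrams against the basis-level formulae by direct computation, as in the two preceding propositions --- is a genuinely different route from the paper's. The paper never evaluates the composites $\tilde\theta_j$ on basis elements: it argues structurally that the weight-space data forces the diagrams for $\theta_{1,l}$ and $\theta_{2,u}$ outright; for the harder components $\theta_{2,l}$ and $\theta_{1,u}$ it enumerates the a priori possible diagrams (an $\alpha$-box and a $\beta$-box in various horizontal positions, plus possible Temperley-Lieb terms), uses relation \ref{eq:15} to show that sliding a box sideways only changes a sign modulo TL terms, and then eliminates all TL contamination by composing with the projection onto $\mathcal{P}^{-}_{p-i}$ (every term of which begins with $\alpha$ or $\beta$) together with a through-strand count: a surviving TL term would need $2p-i-1\leq i-1$, i.e.\ $i\geq p$, which is impossible. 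Your plan needs none of that machinery --- but only if the direct verification can actually be carried out, and that is where the gap lies.

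The computation you propose requires an explicit basis-level embedding of \emph{both} copies of $\mathcal{P}^{-}_{p-i}$ inside $X^{\otimes 2p+i-1}$, and your claim that the module actions of Section \ref{background} let you translate the abstract maps into ``explicit coefficients of $F^{(\cdot)}x_{0}$'' fails for exactly the two components that are hard. The vector $x_{0,2p+i-1}$ generates only the cyclic submodule of the lower copy spanned by the images of $\tilde{x}_j$ and $\tilde{a}_n$; dually, $x_{2p+i-1,2p+i-1}$ generates only the $\tilde{y}'_j,\tilde{a}'_n$ part of the upper copy. This is why the previous two propositions (and, in your scheme, $\theta_{1,l}$ and $\theta_{2,u}$) reduce to $F^k x_0$ and $E^k x_{z,z}$ bookkeeping. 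But the image of $\theta_{2,l}$ contains the vectors $\tilde{y}_m$ of the lower copy, which lie in neither cyclic submodule, and the image of $\theta_{1,u}$ contains the $\tilde{x}'_m$ of the upper copy; no part of the upper copy is of the form $F^k x_{0}$ at all, since $F^p=0$ in $\bar{U}_{q}(\mathfrak{sl}_{2})$ kills every candidate power. So for $\theta_{2,l}$ and $\theta_{1,u}$ (and dually for $\Gamma$) the right-hand side of your intended equality is simply not available in closed form from the paper's data; producing it means solving for the embedding of the non-cyclic part of the projective modules in the tensor power, a substantial computation you neither perform nor flag. Note these are precisely the cases the paper itself calls ``more complicated'' and handles by the indirect argument. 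Relatedly, your stated main obstacle --- that relations \ref{eq:1}--\ref{eq:8} are needed to guarantee the composite is a module homomorphism --- is misplaced: the diagrams are automatically module maps, being composites of $\alpha$, $\beta$, cups, caps and Jones--Wenzl projections; the genuine difficulty is the identification above, which is exactly what the paper's argument is designed to avoid.
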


\begin{proof}
We give a proof for $\theta$, with the proof for $\Gamma$ following similarly.\\
From how these maps act on the weight spaces, we can conclude that they must contain $\alpha$ and $\beta$. As the diagrams will have $2p-i-1$ points at the top and $2p+i-1$ points at the bottom, then considering the properties of $\alpha$ and $\beta$, we see that, up to some constant, the diagrams for $\theta_{1,l}$ and $\theta_{2,u}$ must be as given.\\
Determining the diagrams for $\theta_{2,l}$ and $\theta_{1,u}$ is more complicated. From the expected weight space action and number of points along the top and bottom, we find that $\theta_{2,l}$ can be a sum over diagrams of the form $(1^{\otimes j}\otimes\beta\otimes 1^{\otimes(i-j)})(\alpha\otimes 1^{\otimes i})(1^{\otimes (2p-i-1)}\otimes\cap^{\otimes i})$ for $0\leq j\leq i$, plus Temperley-Lieb elements.

To show which one of these diagrams is correct, we need the following. Denote the sum in relation \ref{eq:15} by $S_{\alpha}$, and consider the following diagram:
\begin{figure}[H]
	\centering
	\includegraphics[width=0.15\linewidth]{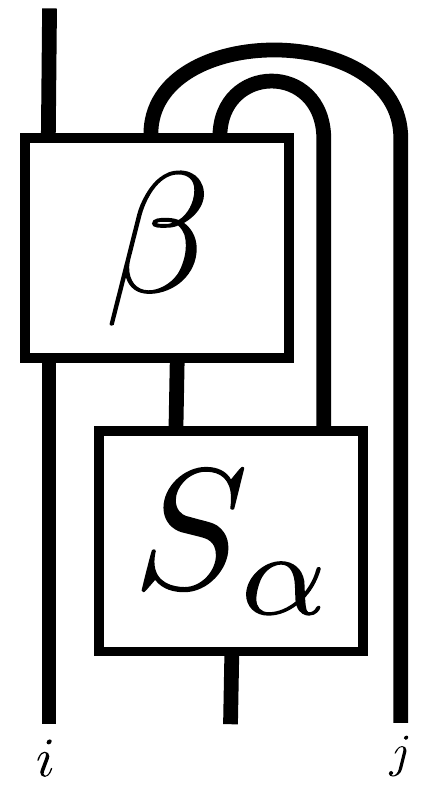}
\end{figure}
with $k_{1}=1,k_{2}=0$. It simplifies to give the following relation:
\begin{figure}[H]
		\centering
		\includegraphics[width=0.7\linewidth]{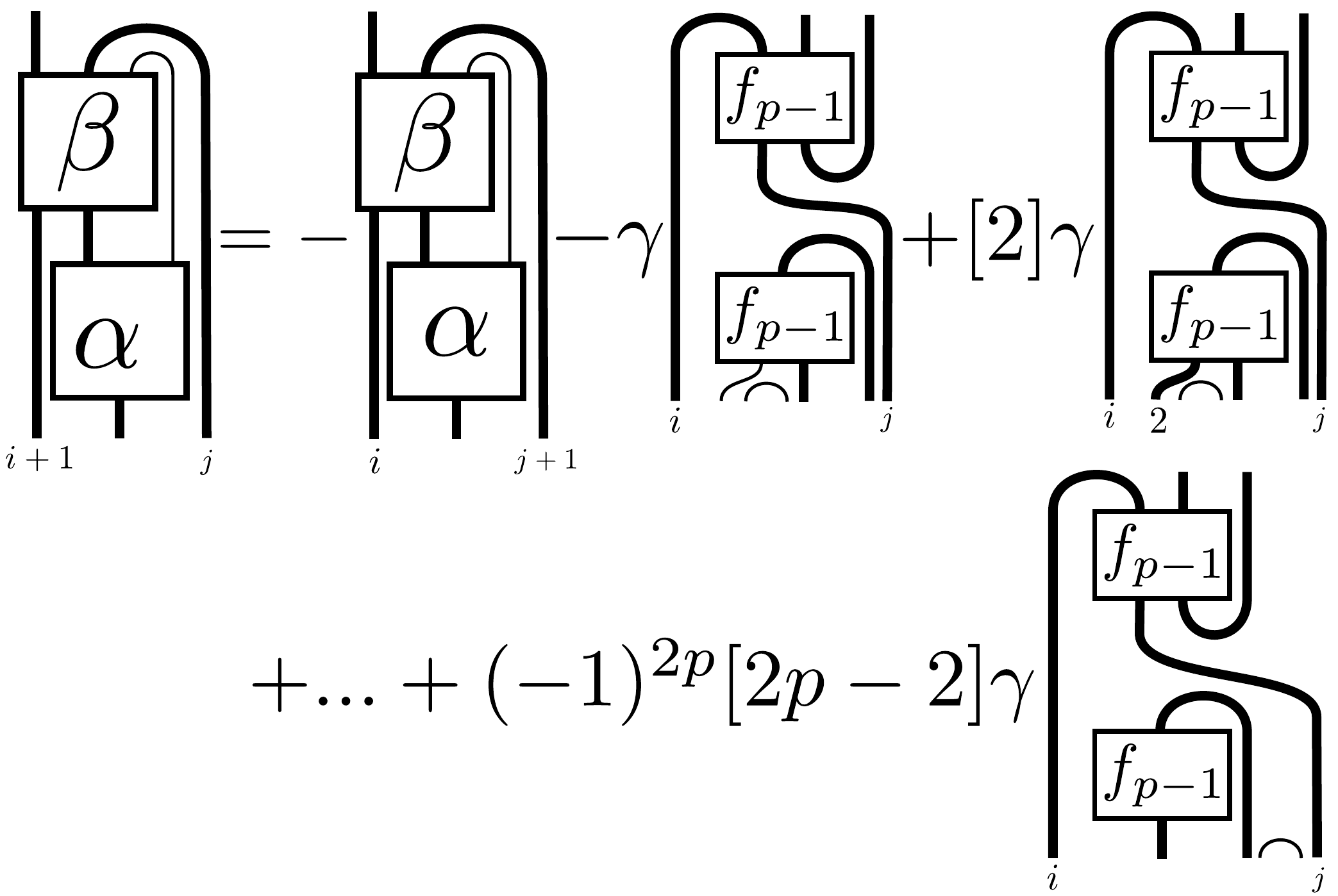}
\end{figure}
with $0\leq i+j\leq 2p-2$. Hence shifting a box sideways is equivalent to multiplying by $-1$ and adding some Temperley-Lieb elements. It follows immediately from this that we can write:
\begin{figure}[H]
		\centering
		\includegraphics[width=1\linewidth]{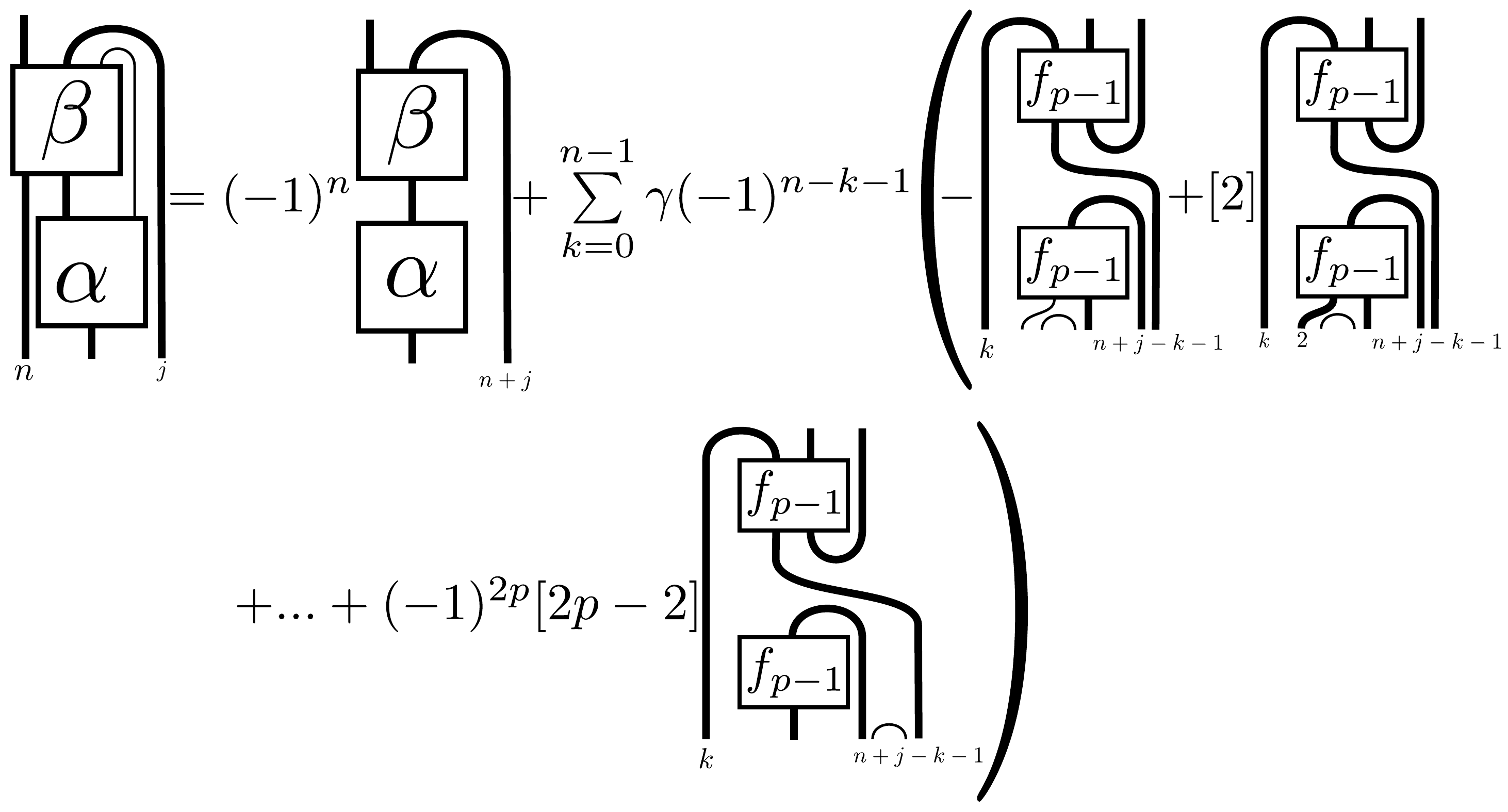}
\end{figure}
Hence we just need to show that any Temperley-Lieb elements in the map must be equal to zero. There are two cases to consider, TL elements appearing in the above diagram, and other TL elements.\\
For the first case, consider the formula for the projections onto $\mathcal{P}^{-}_{i}$ given in Section \ref{ind proj}. From how they act on the module bases, taking the composition of the homomorphism $\theta$ and the projection should be equal to the homomorphism. Noting this, and that each term in the projection contains $\alpha$ or $\beta$ at the left-hand side, then any of the Temperley-Lieb elements in the above sum, acting on the projection, will give zero. Hence it follows that for the composition of the homomorphism and the projection to equal the homomorphism, then the only term in the homomorphism containing $\alpha$ and $\beta$, must be $(\beta\alpha\otimes 1^{\otimes i})(1^{\otimes (2p-i-1)}\otimes\cap^{\otimes i})$.\\
To rule out other TL elements, we note that the TL element acting on the projection must cause the partial trace of $\alpha\beta$ to give a TL element, otherwise the composition of the homomorphism and projection would not equal the homomorphism. However if it does cause the partial trace, then the resulting $(2p-i-1\rightarrow 2p+i-1)$ TL element would have at most $i-1$ through strings. For a TL element acting on the projection to be non-zero, it must have at least $2p-i-1$ through strings. Hence for the TL element to appear in the homomorphism, we need $2p-i-1\leq i-1$, which occurs when $i\geq p$. As the homomorphism is only for $i\leq p-1$, this shows that there are no TL elements in the homomorphism, and so the diagrams for $\theta_{2,l}$ and $\theta_{1,u}$ are as given.
\end{proof}

By considering appropriate compositions of the previous homomorphisms, we get:
\begin{corr}
Up to a constant, the second endomorphism on the two copies of $\mathcal{P}^{-}_{i}$ in $X^{\otimes 3p-i-1}$ is given by:
\begin{figure}[H]
	\centering
	\includegraphics[width=0.3\linewidth]{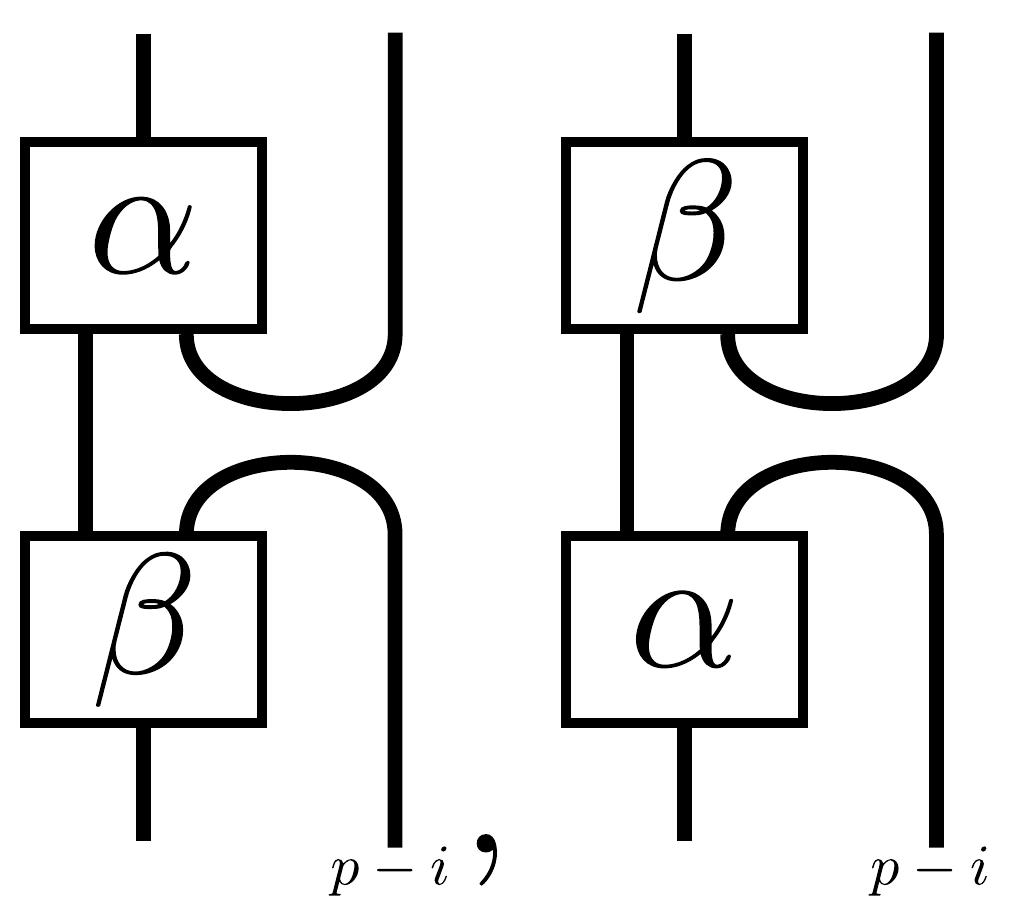}
\end{figure}
\end{corr}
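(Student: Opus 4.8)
The plan is to obtain this second endomorphism exactly as the second endomorphism on $\mathcal{P}^{+}_{i}$ was obtained in the previous Corollary, namely as a composite of the two homomorphisms of the preceding Proposition. The one new feature is that $\mathcal{X}^{-}_{i}$ never appears in any $X^{\otimes n}$, so the endomorphism on $\mathcal{P}^{-}_{i}$ cannot be factored through its simple head as in the positive case. Instead I would factor it through $\mathcal{P}^{+}_{p-i}$, which does appear (first in $X^{\otimes p+i-1}$). Applying the previous Proposition with $i$ replaced by $p-i$ therefore yields diagrammatic maps $\mathcal{P}^{-}_{i}\rightarrow\mathcal{P}^{+}_{p-i}$, realized as a tangle $X^{\otimes 3p-i-1}\rightarrow X^{\otimes p+i-1}$, and $\mathcal{P}^{+}_{p-i}\rightarrow\mathcal{P}^{-}_{i}$, realized as $X^{\otimes p+i-1}\rightarrow X^{\otimes 3p-i-1}$. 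The claimed picture is the vertical stacking of these two tangles.

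First I would verify on the module bases that this composite is the required endomorphism up to a scalar. Using the basis formulae $\theta(a_m)=0$, $\theta(b_m)=g_1\tilde{x}_m+g_2\tilde{y}_m$, $\theta(x_n)=g_2\tilde{a}_n$, $\theta(y_n)=g_1\tilde{a}_n$ and the analogous formulae for the partner map (with $i$ and $p-i$ interchanged), the composite sends $b_m\mapsto(g_1 k_2+g_2 k_1)a_m$ and annihilates $a_m$, $x_n$ and $y_n$. This is precisely the nilpotent map $b_m\mapsto a_m$ up to the constant $g_1 k_2+g_2 k_1$, so it remains only to identify the stacked tangle with this map.

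For that identification I would run the same argument as in the $\mathcal{P}^{+}_{i}$ Corollary: the inner composite $\mathcal{P}^{+}_{p-i}\rightarrow X^{\otimes p+i-1}\rightarrow\mathcal{P}^{+}_{p-i}$ is projection onto a direct summand followed by its inclusion, hence the identity on $\mathcal{P}^{+}_{p-i}$, so the full composite collapses to $\mathcal{P}^{-}_{i}\rightarrow\mathcal{P}^{+}_{p-i}\rightarrow\mathcal{P}^{-}_{i}$, identified above as the second endomorphism. The simplification of the stacked $\alpha$- and $\beta$-boxes then proceeds through relations \ref{eq:2} and \ref{eq:3} ($\alpha\beta\alpha=\gamma\alpha$, $\beta\alpha\beta=\gamma\beta$), together with $\alpha^{2}=\beta^{2}=0$ from \ref{eq:1} and the annihilation relations \ref{eq:8}, which collapse the intermediate cups and caps.

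Finally, since the projection onto $\mathcal{P}^{-}_{i}$ in $X^{\otimes 3p-i-1}$ is really a projection onto $\mathcal{P}^{-}_{i}\oplus\mathcal{P}^{-}_{i}$, I would record that the single stacked diagram realizes the second endomorphism on both copies simultaneously, the lower and upper copies being separated by relations \ref{eq:4} and \ref{eq:7} exactly as in Theorem \ref{ind proj}. The main obstacle I expect is this last diagrammatic step, specifically ruling out spurious Temperley--Lieb terms on either copy. I would handle it by the same through-string counting used in the previous Proposition: any surviving Temperley--Lieb element would force the partial trace of $\beta\alpha$ (or $\alpha\beta$) to produce a tangle with too few through strings to act non-trivially on the projection, which is impossible for $i\leq p-1$.
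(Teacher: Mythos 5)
Your proposal is correct and takes essentially the same route as the paper: the paper obtains this corollary precisely by composing the two homomorphisms $\theta$ and $\Gamma$ of the preceding Proposition (with $i$ replaced by $p-i$), which on bases sends $\tilde{b}_{m}\mapsto(g_{1}k_{2}+g_{2}k_{1})\tilde{a}_{m}$ and kills everything else, exactly as you compute. The paper records no detail beyond ``considering appropriate compositions of the previous homomorphisms,'' so your verification on module bases, the collapse of the inner factor through $\mathcal{P}^{+}_{p-i}$, and the exclusion of spurious Temperley--Lieb terms supply more justification than the paper itself gives.
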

Considering other compositions of the diagrammatic morphisms, we should expect the following relations to hold:
\begin{figure}[H]
	\centering
	\includegraphics[width=0.7\linewidth]{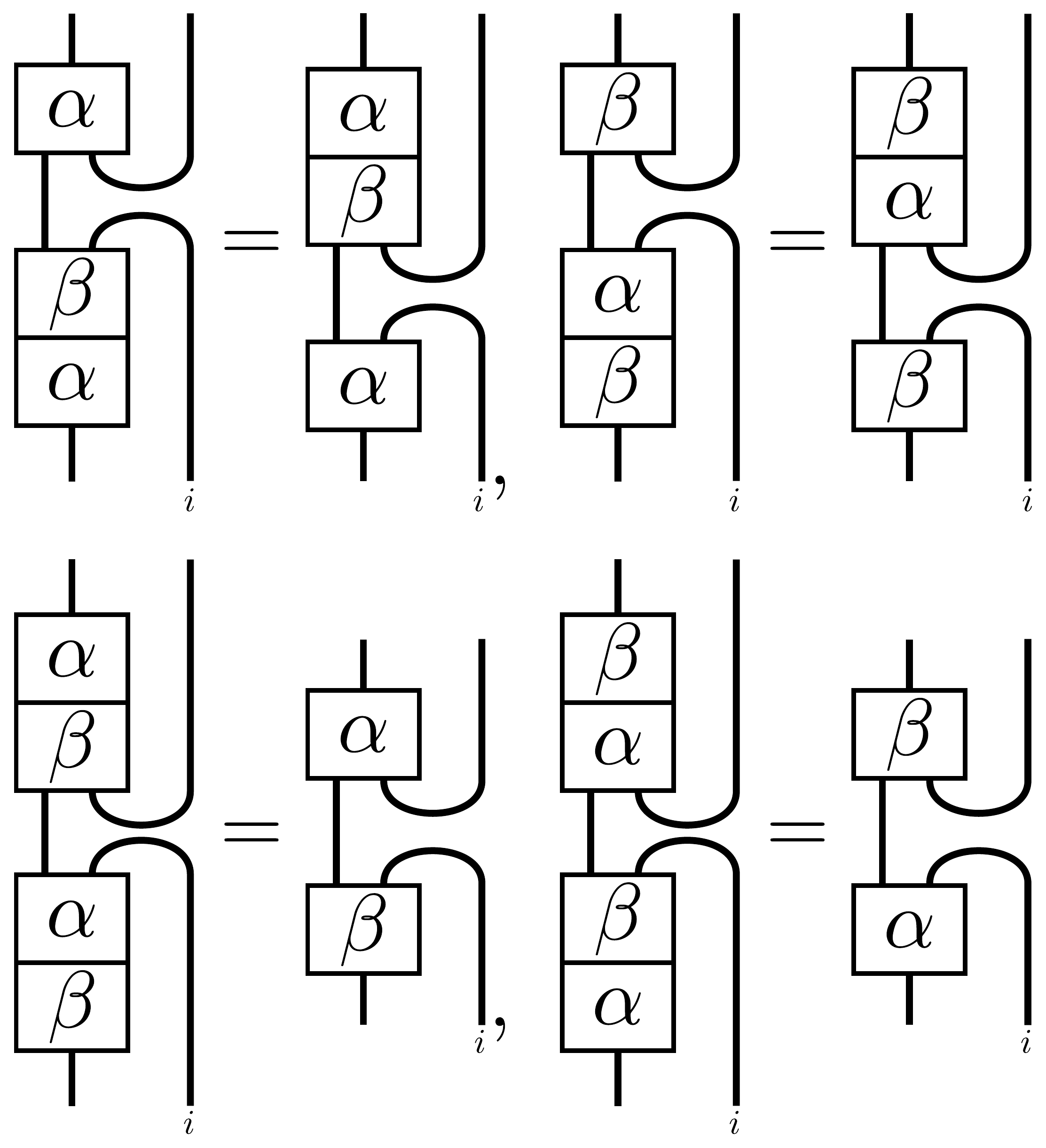}
\end{figure}
It is straightforward to see that, for fixed $i$, these are all equivalent. These relations can be proven by inserting relation \ref{eq:15}, denoted $S_{\alpha}$, into the following diagram and simplifying any other diagrams that appear:
\begin{figure}[H]
	\centering
	\includegraphics[width=0.3\linewidth]{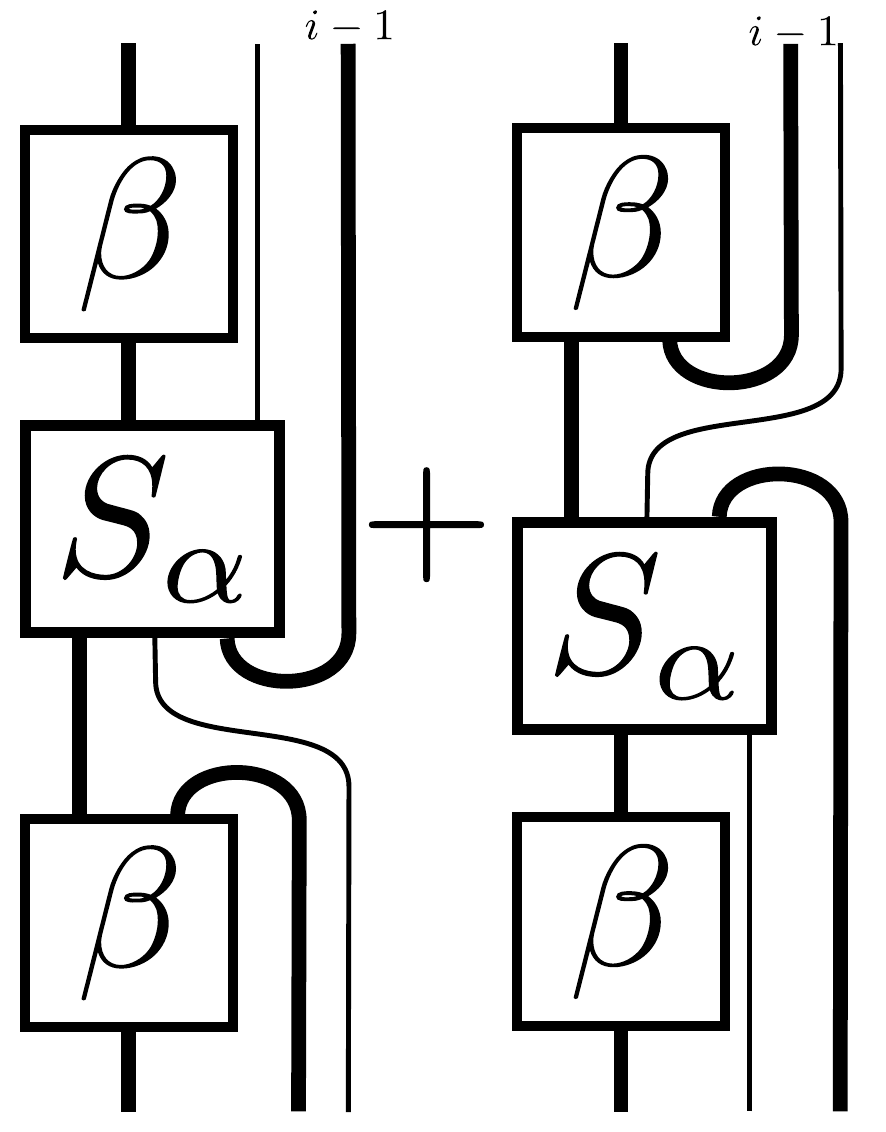}
\end{figure}

\titleformat{\section}{\normalfont\Large\bfseries}{\appendixname~\thesection.}{1em}{}
\begin{appendices}
	\numberwithin{equation}{section}
	\section{Combinatorial Relations.}
	There are a number of generalized relations for $\bar{U}_{q}(\mathfrak{sl}_{2})$ and its action on $X^{\otimes n}$, which we record here.\\
	The quantum group $\bar{U}_{q}(\mathfrak{sl}_{2})$ and its relations can be used to give the following generalized conditions:
	\begin{align}
	\Delta^{k}(K)&=K^{\otimes k+1} \label{eq:A1}\\
	\Delta^{k}(E)&=\sum\limits_{i=0}^{k}(1^{\otimes i})\otimes E\otimes (K^{\otimes (k-i)}) \label{eq:A2}\\
	\Delta^{k}(F)&=\sum\limits_{i=0}^{k}\big((K^{-1})^{\otimes i}\big)\otimes F\otimes(1^{\otimes (k-i)}) \label{eq:A3}\\
	EF^{k}&=F^{k}E+(\frac{[k]}{q-q^{-1}})\big(q^{1-k}F^{k-1}K-q^{k-1}F^{k-1}K^{-1}\big) \label{eq:A4}\\
	FE^{k}&=E^{k}F+(\frac{[k]}{q-q^{-1}})\big(q^{1-k}E^{k-1}K^{-1}-q^{k-1}E^{k-1}K\big) \label{eq:A5}\\
	\Delta E^{k}&=\sum\limits_{i=0}^{k}\lambda_{i,k}E^{i}\otimes K^{i}E^{k-i} \label{eq:A6}\\
	\Delta F^{k}&=\sum\limits_{i=0}^{k}\lambda_{i,k}K^{-i}F^{k-i}\otimes F^{i} \label{eq:A7}\\
	\lambda_{i,k}&=q^{(i^{2}-ik)}\frac{([k]!)}{([i]!)([k-i]!)} \label{eq:A8}
	\end{align}
	The $\bar{U}_{q}(\mathfrak{sl}_{2})$ action on the basis elements satisfies the following:
	\begin{align}
	E^{n}\rho_{i_{1},...,i_{n},z}&=q^{\big(nz-\frac{1}{2}(n^{2}-n)-(\sum\limits_{j=1}^{n}i_{j})\big)}([n]!)x_{0,z} \label{eq:A9}\\
	F^{z-n}\rho_{i_{1},...,i_{n},z}&=q^{\big(nz-\frac{1}{2}(n^{2}-n)-(\sum\limits_{j=1}^{n}i_{j})\big)}([z-n]!)x_{z,z} \label{eq:A10}\\
	F^{k}x_{0,z}&=\sum\limits_{1\leq i_{j}\leq z}q^{\big(\frac{1}{2}(k^{2}+k)-(\sum\limits_{j=1}^{k}i_{j})\big)}([k]!)\rho_{i_{1},...,i_{k},z} \label{eq:A11}\\
	E^{k}x_{z,z}&=\sum\limits_{1\leq i_{j}\leq z}q^{\big(\frac{1}{2}(z-k)(z-k+1)-(\sum\limits_{j=1}^{z-k}i_{j})\big)}([k]!)\rho_{i_{1},...,i_{z-k},z} \label{eq:A12}\\
	E^{k}x_{z+1,z+1}&=[k](E^{k-1}x_{z,z})\otimes\nu_{0}+q^{-k}(E^{k}x_{z,z})\otimes\nu_{1} \label{eq:A13}\\
	&=q^{k-z-1}[k]\nu_{0}\otimes(E^{k-1}x_{z,z})+\nu_{1}\otimes(E^{k}x_{z,z}) \label{eq:A14}\\
	F^{k}x_{0,z+1}&=(F^{k}x_{0,z})\otimes\nu_{0}+q^{k-z-1}[k](F^{k-1}x_{0,z})\otimes\nu_{1} \label{eq:A15}\\
	&=q^{-k}\nu_{0}\otimes(F^{k}x_{0,z})+[k]\nu_{1}\otimes(F^{k-1}x_{0,z}) \label{eq:A16}
	\end{align}
	These come from considering all contributions to the coefficient as different orderings of the integers $i_{1},...,i_{n}$, where each ordering describes the order in which the zero's appeared. For the standard ordering with $i_{1}<i_{2}<...<i_{n}$, its contribution to the coefficient is just $q^{-z+i_{1}}q^{-z+i_{2}}...q^{-z+i_{n}}=q^{(-nz+\sum\limits_{j=1}^{n}i_{j})}$. Interchanging two integers in the ordering multiplies this by $q^{\pm 2}$, and the coefficient comes from considering all possible permutations.\\
	\\
	For  integers $1\leq i_{1}<i_{2}<...<i_{n}\leq z$, we have:
	\begin{align}
	\xi_{n,z}:=&\sum\limits_{1\leq i_{j}\leq z}q^{-2(\sum\limits_{j=1}^{n}i_{j})}=q^{-n-nz}\frac{([z]!)}{([n]!)([z-n]!)} \label{eq:A17}\\
	\xi_{n,z}=& q^{-2z}\xi_{n-1,z-1}+\xi_{n,z-1}
	\end{align}
	where the recurrence relation comes from considering the two cases in $\xi_{n,z}$, when $i_{n}=z$ and when $i_{n}\neq z$.  
\end{appendices}
\subsection*{Acknowledgements}
The author's research was supported by EPSRC DTP grant EP/K502819/1. The author would like to thank the Isaac Newton Institute for Mathematical Sciences, Cambridge, for support and hospitality during the programme Operator Algebras: Subfactors and their applications, where work on this paper was undertaken. This work was supported by EPSRC grant no EP/K032208/1. This work was partially supported by ISF grant 2095/15 and the Center for Advanced Studies in Mathematics in Ben Gurion University.
\bibliography{references}

\begin{thebibliography}{10}

\bibitem{Arike}
Y.~Arike.
\newblock A construction of symmetric linear functions on the restricted
  quantum group $\bar{U}_{q}(\mathit{sl}_{2})$.
\newblock {\em Osaka Journal of Mathematics}, 47:535--557, 2010.

\bibitem{BBG}
A.~Beliakova, C.~Blanchet, and A.~M. Gainutdinov.
\newblock Modified trace is a symmetrised integral.
\newblock {\em arXiv:1801.00321}.

\bibitem{EvansPugh2}
D.~E. Evans and M.~Pugh.
\newblock ${A}_2$-{P}lanar algebras $1$.
\newblock {\em Quantum Topology}, 1:321--377, 2010.

\bibitem{FGST3}
B.~Feigin, A.~Gainutdinov, A.~Semikhatov, and I.~Tipunin.
\newblock Kazhdan-{L}usztig correspondence for the representation category of
  the triplet {W}-algebra in logarithmic {CFT}.
\newblock {\em Theoretical and Mathematical Physics}, 148:398--427, 2006.

\bibitem{FGST2}
B.~Feigin, A.~Gainutdinov, A.~Semikhatov, and I.~Tipunin.
\newblock Modular group representations and fusion in logarithmic conformal
  field theories and in the quantum group center.
\newblock {\em Communications in Mathematical Physics}, 265:47--93, 2006.

\bibitem{GST}
A.~M. Gainutdinov, H.~Saleur, and I.~Y. Tipunin.
\newblock Lattice {W}-algebras and logarithmic {CFT}s.
\newblock {\em Journal of Physics A: Mathematical and Theoretical}, 47(49),
  2014.

\bibitem{Ibanez}
E.~Ibanez.
\newblock {\em Evaluable Jones-Wenzl idempotents at root of unity and modular
  representation on the center of $\bar{U}_qsl(2)$}.
\newblock PhD thesis, University of Montpellier, 2016.

\bibitem{Jones1}
V.~F.~R. Jones.
\newblock Index for subfactors.
\newblock {\em Invent Math}, 72(1):1--25, 1983.

\bibitem{Jones2}
V.~F.~R. Jones.
\newblock A polynomial invariant for knots via von neumann algebras.
\newblock {\em Bulletin of the American Mathematical Society}, 12(1):103--112,
  jan 1985.

\bibitem{JonesP1}
V.~F.~R. Jones.
\newblock Planar algebras, 1.
\newblock {\em arXiv:math/9909027}, 2007.

\bibitem{KoSa}
H.~Kondo and Y.~Saito.
\newblock Indecomposable decomposition of tensor products of modules over the
  restricted quantum universal enveloping algebra associated to
  $\boldsymbol{\mathfrak{sl}_2}$.
\newblock {\em Journal of Algebra}, 330:103--129, 2011.

\bibitem{Kuper}
G.~Kuperberg.
\newblock Spiders for rank 2 {L}ie algebras.
\newblock {\em Communications in Mathematical Physics}, 180:109--151, 1996.

\bibitem{Martin}
P.~P. Martin.
\newblock On {S}chur-{W}eyl duality, ${A}_n$ {H}ecke algebras and quantum
  $sl({N})$ on $\otimes^{n+1}\mathbb{C}^{N}$.
\newblock {\em International Journal of Modern Physics A}, 07:645--673, 1992.

\bibitem{methesis}
S.~Moore.
\newblock {\em Non-Semisimple Planar Algebras from
  $\bar{U}_{q}(\mathfrak{sl}_{2})$}.
\newblock PhD thesis, Cardiff University, 2016.

\bibitem{mepaper}
S.~Moore.
\newblock Non-semisimple planar algebras from the representation theory of
  $\bar{U}_{q}(\mathfrak{sl}_{2})$.
\newblock {\em Reviews in Mathematical Physics}, page 1850017, aug 2018.

\bibitem{MPS}
S.~Morrison, E.~Peters, and N.~Snyder.
\newblock Skein theory for the ${D}_{2n}$ planar algebras.
\newblock {\em Journal of Pure and Applied Algebra}, 214:117--139, 2010.

\bibitem{Suter}
R.~Suter.
\newblock Modules over $ {U}_q(sl_2)$.
\newblock {\em Communications in Mathematical Physics}, 163:359--393, 1994.

\bibitem{TL}
H.~N.~V. Temperley and E.~H. Lieb.
\newblock Relations between the 'percolation' and 'colouring' problem and other
  graph-theoretical problems associated with regular planar lattices: Some
  exact results for the 'percolation' problem.
\newblock {\em Proceedings of the Royal Society of London. Series A,
  Mathematical and Physical Sciences}, 322(1549):251--280, 1971.

\bibitem{TsuWo}
A.~Tsuchiya and S.~Wood.
\newblock The tensor structure on the representation category of the
  $\mathcal{W}_p$ triplet algebra.
\newblock {\em Journal of Physics A: Mathematical and Theoretical}, 46(44),
  2012.

\bibitem{Wenzl}
H.~Wenzl.
\newblock On sequences of projections.
\newblock {\em C. R. Math. Rep. Acad. Sci. Canada}, 9:5--9, 1987.

\bibitem{Xiao}
J.~Xiao.
\newblock Finite dimensional representations of ${U}_t(sl(2))$ at roots of
  unity.
\newblock {\em Canadian Journal of Mathematics}, 49:772--787, 1997.

\end{thebibliography}
\bibliographystyle{abbrv}

\end{document}